\documentclass[12pt]{article}
\usepackage{verbatim}
\usepackage{latexsym}
\usepackage{amsfonts}
\usepackage{amsmath}
\usepackage{amsthm}
\usepackage{amssymb}
\usepackage{url}
\usepackage{authblk}
\usepackage[
    colorlinks=true,
    linkcolor=blue,
    citecolor=blue,
    urlcolor=blue
]{hyperref}
\usepackage{todonotes}
\usepackage{cmbright}
\usepackage{url,tikz}
\usetikzlibrary{arrows,positioning,automata}

\newtheorem{theorem}{Theorem}
\newtheorem{lemma}[theorem]{Lemma}
\newtheorem{corollary}[theorem]{Corollary}

\theoremstyle{definition}
\newtheorem*{definition}{Definition}

\newcommand{\Aut}{\mathrm{Aut}}

\begin{document}
\title{Finite groups with large power-avoiding subsets}
\author[1]{Simon R. Blackburn}
\author[2]{Sarah B. Hart}
\author[2]{Daniel McVeagh}
\affil[1]{Department of Mathematics,
Royal Holloway University of London,
Egham, Surrey TW20 0EX, United Kingdom}
\affil[2]{School of Computing and Mathematical Sciences, Birkbeck College, University of London, Malet Street, London WC1E 7HX, United Kingdom}
\date{}
\maketitle
\begin{abstract}
A subset $X$ of a finite group $G$ is \emph{$k$-power-avoiding} if for all $g\in G$ we have that $\{g,g^k\}\not\subseteq X$. The paper shows that if $G$ contains a $k$-power-avoiding subset $X$ with $|X|\geq |G|-c$, then the group $G^k$ generated by the $k$th powers of elements of $G$ has bounded order (in $k$ and $c$). We provide more detailed structural results when $c\leq 2$, and in particular we classify the groups which arise when $c\leq 2$ and $k$ is prime. 
\end{abstract}

\section{Introduction}

A recurring theme in finite group theory is that the existence of a large subset satisfying a restrictive algebraic condition can impose strong constraints on the structure of the ambient group. We can think of work inspired by the Burnside conjectures~\cite{Burnside} as falling within this theme. Moreover, we may also think of the classical theorem due to B.H. Neumann~\cite{Neumann}, which states that a group $G$ whose centralisers all have finite index bounded by an integer $b$ must have derived subgroup $G'$ of order bounded by a function of $b$. (Guralnick and Mar\'oti~\cite{Guralnick} have provided good bounds on $|G'|$ in this context.) Additional recent examples include the interest in large product-free subsets of finite groups as part of additive combinatorics (see for example, the survey by Kedlaya~\cite{Kedlaya}); a paper by Keevash, Lifshitz and Minzer~\cite{Keevash} resolves the question for the alternating groups). In this paper we study an analogue of product-free subsets, namely subsets of a finite group which avoid containing an element together with a fixed power of that element. 

Let $G$ be a finite group and let $k\geq 2$ be an integer. We say that a subset $X$ of $G$ is \emph{$k$-power-avoiding} if
$$
\{g,g^k\}\not\subseteq X
$$
for every $g\in G$ and write
$$
s_k(G)=\max\{|X|:X\subseteq G\text{ is $k$-power-avoiding}\}.
$$
Thus $s_k(G)$ is the maximum size of a $k$-power-avoiding subset that $G$ can admit. 

What can be said about the structure of a group for which $s_k(G)$ is large? More specifically, suppose that
$$
s_k(G)\geq |G|-c
$$
for some positive integer $c$. Our main result shows that this condition forces $G^k$, the subgroup generated by the $k$th powers of elements of $G$, to have bounded order.

\begin{theorem}\label{thm:main}
Let $G$ be a finite group. Let $k$ and $c$ be positive integers, and suppose
$$
s_k(G)\geq |G|-c.
$$
Then $G$ contains a characteristic subgroup $N$ such that $G/N$ has exponent dividing $k$ and such that $|N|$ is bounded by a function of $c$ and $k$. Indeed, we may take $N=G^k$.
\end{theorem}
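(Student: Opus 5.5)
The plan is to recast the hypothesis in graph-theoretic terms and so show that $G$ has few distinct $k$th powers; a Schur-type argument then bounds $|G^k|$. Take $N=G^k$. It is characteristic, since automorphisms permute the $k$th powers, and $G/N$ has exponent dividing $k$ by definition, so only the bound on $|N|$ needs proof. Let $X$ be a $k$-power-avoiding subset with $|X|\geq |G|-c$, and put $C=G\setminus X$, so $|C|\leq c$. Consider the graph on $G$ with an edge (possibly a loop) joining $g$ and $g^k$ for every $g$. The condition $\{g,g^k\}\not\subseteq X$ says exactly that $C$ is a vertex cover of this graph. In particular, every $g$ with $g^k=g$ lies in $C$, and every $g\notin C$ satisfies $g^k\in C$.

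\textbf{Step 1: few $k$th powers.} Let $P=\{g^k : g\in G\}$. The elements $g\notin C$ contribute only powers lying in $C$, and the elements $g\in C$ contribute at most $|C|$ further powers. Hence $|P|\leq 2c$. (When $k=1$ every $g$ is a fixed point, so $G=C$ and $|G|\leq c$; we may therefore assume $k\geq 2$, although the argument does not really need this.)

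\textbf{Step 2: consequences for $N=\langle P\rangle$.} The set $P$ is closed under conjugation, so each $x\in P$ has at most $2c$ conjugates, giving $|G:C_G(x)|\leq 2c$. Moreover, if $x=g^k$ then every power $x^j=(g^j)^k$ also lies in $P$, so $x$ has order at most $2c$. Since $N$ is generated by the at most $2c$ elements of $P$, the subgroup $\bigcap_{x\in P}C_G(x)$ centralises $N$ and has index at most $(2c)^{2c}$ in $G$. Therefore $|N:Z(N)|\leq (2c)^{2c}$.

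\textbf{Step 3: bounding $|N|$.} By Schur's theorem, in its quantitative form (for example Wiegold's bound), $|N'|$ is bounded by a function of $|N:Z(N)|$, and hence by a function of $c$. The quotient $N/N'$ is abelian and generated by at most $2c$ elements, each of order at most $2c$, so $|N/N'|\leq (2c)^{2c}$. Combining these, $|N|=|N'|\,|N/N'|$ is bounded by a function of $c$ alone, which is stronger than the theorem requires.

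The main obstacle is Step 1, namely seeing that a small vertex cover forces the image of the power map to be small. Once $|P|\leq 2c$ is established, the rest is the standard Neumann/Schur machinery, with the only care needed being to keep the bounds explicit.
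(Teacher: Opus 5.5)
Your proof is correct, and it takes a genuinely different and considerably more elementary route than the paper.

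\textbf{The paper's route.} It bounds the number of $k$th powers graph-theoretically. It deletes one edge from each cycle of $D_k(G)$ to get a root-directed forest. It then proves by induction (Lemma~\ref{lem:forest_nonleaves}) that such a forest has at most $2c$ non-leaves, and adds at most $c$ cycle vertices to reach $|Y|\le 3c$ (Lemma~\ref{lem:nonleaf}). Separately, it bounds $\exp(G)$ in terms of $c$ and $k$ via directed paths (Lemma~\ref{lem:exponent}). Finally it invokes Zelmanov's solution of the restricted Burnside problem for the $3c$-generator group $N$ of bounded exponent.

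\textbf{Your route.} Your inclusion $P\subseteq C\cup\{g^k: g\in C\}$ replaces Lemmas~\ref{lem:forest_nonleaves} and~\ref{lem:nonleaf} in one line and gives the sharper bound $2c$. The same one-line argument in fact reproves Lemma~\ref{lem:forest_nonleaves} directly. Your two further observations are the real gain: $P$ is closed under conjugation, and $P$ contains every power of each of its elements. These put you in the setting of Dietzmann's lemma. So Schur's theorem (or Dietzmann's lemma applied directly) replaces Zelmanov's theorem, and with it the reliance on the classification of finite simple groups through the Hall--Higman reduction.

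\textbf{What each buys.} Your bound on $|G^k|$ depends on $c$ alone, not on $k$, which is stronger than the theorem as stated. It also answers affirmatively the question raised at the end of Section~\ref{sec:main_proof} about whether a more direct proof exists. The paper's route yields, as by-products, a combinatorial lemma on independent sets in root-directed forests and an explicit divisibility condition on $\exp(G)$. Neither is needed for the theorem. A bound on $\exp(G)$ also falls out of your argument, since $|g|$ divides $k\,|g^k|$ and $|g^k|\le 2c$.
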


The conclusion of Theorem~\ref{thm:main} is essentially best possible. For  suppose that~$G$ has a normal subgroup~$N$ with $|N|\leq c$ and that $G/N$ has exponent dividing $k$. Then $g^k\in N$ for every $g\in G$. Consequently
$$
X=G\setminus N
$$
is $k$-power-avoiding, since $g\in X$ implies that $g^k\in N$ and hence $g^k\notin X$. Thus $s_k(G)\geq |G|-c$. For example, when $k=c=p$ with $p$ prime, one may take $G$ to be an extra-special $p$-group of order $p^{2d+1}$ and set $N=G'$.

We remark that Theorem~\ref{thm:main} is reminiscent of the theorem of B.H. Neumann that was mentioned above: Neumann's theorem says that groups such that all elements commute with much of the group are almost abelian; Theorem~\ref{thm:main} says that 
groups that mainly avoid the $k$th power map are almost of exponent $k$.

Our approach to Theorem~\ref{thm:main} is graph-theoretic. Graphs defined on groups have been studied extensively, and we refer to Cameron~\cite{Cameron} for a general survey. Of relevance here is the power graph of a group, where there is a vertex for each group element, and two vertices are adjacent if one of them is a positive power of the other. This was introduced in the form of a directed graph by Kelarev and Quinn~\cite{Kelarev}, in undirected form by Chakarabarty, Ghosh and Sen~\cite{Chakrabarty} and in papers by Cameron~\cite{Cameron_power} and by Cameron and Ghosh~\cite{CameronGhosh}. We refer to Abawajy, Kelarev and Chowdhury~\cite{AbawajyKelarev} for a survey of power graphs. Cameron and Jafari~\cite{CameronJafari} have studied, in particular, the independence number of the power graph.

More useful for our purposes is a variant of the power graph where we restrict the power concerned to a fixed integer, $k$, and have a directed edge from each group element to its $k$th power:  $g\mapsto g^k$. This digraph is denoted $D_k(G)$ and we note that a $k$-power-avoiding subset of $G$ is then exactly an independent set in $D_k(G)$. Figure~\ref{fig:graph_independent_set} provides an example: the directed graph $D_2(C_{20})$, where $C_{20}$ is the cyclic group of order $20$. We note that $s_k(G)$ is the largest cardinality of an independent set in $D_k(G)$; in the terminology of directed graphs this is known as the independence number of $D_k(G)$. 

\begin{figure}[ht]
\begin{center}
\scalebox{0.7}{
\begin{tikzpicture}[shorten >=1pt,node distance=1cm,auto,
>=triangle 90]

\node[state] (g4) {$g^4$};
\node[state,fill=gray!30] (g8) [right=of g4] {$g^{8}$};
\node[state] (g16) [below=of g8] {$g^{16}$};
\node[state,fill=gray!30] (g12) [left=of g16] {$g^{12}$};

\node[state] (g14)[above right=of g8] {$g^{14}$};
\node[state,fill=gray!30] (g7)[right=of g14] {$g^{7}$};
\node[state,fill=gray!30] (g17)[above=of g14] {$g^{17}$};

\node[state] (g2)[above left=of g4] {$g^{2}$};
\node[state,fill=gray!30] (g11)[left=of g2] {$g^{11}$};
\node[state,fill=gray!30] (g1)[above=of g2] {$g$};

\node[state] (g6)[below left=of g12] {$g^{6}$};
\node[state,fill=gray!30] (g3)[left=of g6] {$g^{3}$};
\node[state,fill=gray!30] (g13)[below=of g6] {$g^{13}$};

\node[state] (g18)[below right=of g16] {$g^{18}$};
\node[state,fill=gray!30] (g19)[right=of g18] {$g^{19}$};
\node[state,fill=gray!30] (g9)[below=of g18] {$g^{9}$};

\node[state] (g0)[left=2.5cm of g3] {$1$};
\node[state] (g10) [above=of g0] {$g^{10}$};
\node[state,fill=gray!30] (g15) [above right=of g10] {$g^{15}$};
\node[state,fill=gray!30] (g5) [above left=of g10] {$g^5$};

\path[->] (g0) edge [loop below] node {} ();

\draw[->] (g5) -> (g10);
\draw[->] (g10) -> (g0);
\draw[->] (g15) -> (g10);

\path[->] (g4) edge [bend left=40] node {} (g8);
\path[->] (g8) edge [bend left=40] node {} (g16);
\path[->] (g16) edge [bend left=40] node {} (g12);
\path[->] (g12) edge [bend left=40] node {} (g4);

\draw[->] (g2) -> (g4);
\draw[->] (g11) -> (g2);
\draw[->] (g1) -> (g2);

\draw[->] (g14) -> (g8);
\draw[->] (g17) -> (g14);
\draw[->] (g7) -> (g14);

\draw[->] (g18) -> (g16);
\draw[->] (g19) -> (g18);
\draw[->] (g9) -> (g18);

\draw[->] (g6) -> (g12);
\draw[->] (g13) -> (g6);
\draw[->] (g3) -> (g6);

\end{tikzpicture}
}
\caption{The graph $D_2(C_{20})$, where $C_{20}$ is generated by $g$. Shaded vertices form an independent set of maximum possible cardinality, hence $s_k(C_{20})=12$.}
\label{fig:graph_independent_set}
\end{center}
\end{figure}
We note from Figure~\ref{fig:graph_independent_set} that replacing the elements $g^{12}$ and $g^8$ in the independent set shown 
with $g^4$ and $g^{16}$ yields a second $2$-power-avoiding subset of the same cardinality. Hence we do not expect maximum $k$-power-avoiding subsets to be unique in general.

The proof of Theorem~\ref{thm:main} has a graph-theoretic part and a group-theoretic part. We first establish results about directed graphs which possess large independent sets. We then apply these results to $D_k(G)$ to prove Theorem~\ref{thm:main}. The final step uses Zelmanov's solution~\cite{Zelmanov1,Zelmanov2} of the restricted Burnside problem. We refer to Vaughan-Lee~\cite{VaughanLee} for an account of the restricted Burnside problem and its solution.

The rest of the paper is structured as follows. In Section~\ref{sec:notation} we introduce the graph-theoretic terminology that we require. In Section~\ref{sec:directed} we prove the directed-graph results used in our argument. In Section~\ref{sec:main_proof} we apply these results to $D_k(G)$ and complete the proof of Theorem~\ref{thm:main}. In Section~\ref{sec:c=2} we provide (Theorems~\ref{thm:c_equal_1} and~\ref{thm:c=2}) stronger structural results when $c\leq 2$. Indeed, when $k$ is prime we provide a classification of such groups (Corollary~\ref{cor:k_odd_prime} and Theorem~\ref{thm:k=2_classification}). These theorems generalise those in McVeagh~\cite{McVeagh} which contains, among other results, classifications of the finite groups arising when $k=2$ and $c\leq 3$. Finally, in Section~\ref{sec:structure} we show (Theorem~\ref{thm:primary_tree}) that $D_k(G)$ is more symmetrical than you might expect; this result is due to McVeagh~\cite{McVeagh}.

\section{Standard terminology from graph theory}
\label{sec:notation}

This section reminds the reader of (mainly standard) terminology from the theory of finite directed graphs that we use in this paper. We assume the reader is familiar with the standard terminology used in (undirected) graph theory.

A \emph{directed graph} $D$ is a finite set $V$ of vertices, together with a finite set $E$ of directed edges. We assume $E$ is a subset of the set of ordered pairs of vertices. (So there is at most one directed edge in each direction between any two vertices, and we allow a directed loop at a vertex.) We often simplify notation by identifying $D$ with its set of vertices, so $v\in D$ means that $v$ is a vertex of $D$.

We say there is a \emph{directed edge from $u$ to $v$} when $(u,v)\in E$; in this situation we say that $u$ and $v$ are \emph{adjacent}. A \emph{directed walk} in $D$ is a sequence $v_0,v_1,\ldots,v_m$ of vertices such that there is a directed edge from $v_i$ to $v_{i+1}$ for $i\in\{0,1,\ldots,m-1\}$. A directed walk in $D$ is a \emph{directed walk from $v_0$ to $v_m$} if the vertices $v_i$ are distinct. A directed path in $D$ is a \emph{directed cycle} if there is also a directed edge from $v_m$ to $v_0$. When $m=0$, we say that a directed cycle is a \emph{loop}.

Let $v$ be a vertex in a directed graph $D$. The \emph{in-degree} of $v$ is the number of edges of the form $(w,v)$ for some $w\in D$. The \emph{out-degree} of $v$ is the number of edges of the form $(v,w)$ for some $w\in D$. We say that a directed graph $D$ has \emph{out-degree 1} if all vertices $v\in D$ have out-degree $1$. An example is the directed graph $D$ obtained from a function $f\colon D\rightarrow D$ by adding a directed edge from $v$ to $f(v)$ for all $v\in D$. When $D$ has out-degree~$1$, we write $[v]$ for the unique infinite directed walk starting at a vertex~$v$.

A vertex $v$ is an \emph{isolated vertex} if $v$ has in- and out-degree $0$. So an isolated vertex is involved in no directed edges.

A vertex $v$ in a directed graph $D$ is a \emph{leaf} if it has in-degree~$0$ and out-degree at most $1$. (Note that a vertex with in-degree $1$ and out-degree $0$ is not a leaf according to our terminology, but an isolated vertex is a leaf.)

Let $D$ be a directed graph and let $r\in D$. We say that $D$ is a \emph{root-directed tree with root $r$}  if $D$ has no directed cycles, the vertices in $D\setminus\{r\}$ have out-degree $1$ and the vertex $r$ has out-degree $0$. The vertex $r$ is the \emph{root}. Since the root-directed tree $D$ is finite (because by definition every directed graph is finite), there is a unique directed path from each vertex in $D\setminus\{r\}$ to the root $r$.  The graph $D$ is a \emph{root-directed forest} if $D$ is the disjoint union of a finite number of root-directed trees. Note that any nonempty subgraph of a root-directed forest is itself a root-directed forest.

An \emph{independent set} $X$ in a directed graph $D$ is a subset of vertices that does not contain a directed edge: if there is an edge from $u$ to $v$ then either $u\notin X$ or $v\notin X$. In particular, an independent set $X$ does not contain any vertices where there are loops.

\section{Directed graphs}
\label{sec:directed}

This section contains two combinatorial lemmas concerning directed graphs. The first is a well-known exercise; the second is the key combinatorial result in this paper.

\begin{lemma}
\label{lem:path}
Let $n$ be a positive integer. Define $P_n$ to be the directed graph on $n$ vertices $v_0,v_1,\ldots ,v_{n-1}$ with directed edges from $v_i$ to $v_{i+1}$ for $0\leq i<n-1$.
The maximum cardinality of an independent set $X$ in $P_n$ is $\lceil n/2\rceil$.
\end{lemma}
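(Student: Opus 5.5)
We establish the two inequalities separately: an explicit independent set of size $\lceil n/2\rceil$, and a pairing argument showing no independent set can be larger.

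\emph{Lower bound.} Take
$$X=\{v_i : 0\leq i\leq n-1,\ i \text{ even}\}.$$
The edges of $P_n$ are exactly the pairs $(v_i,v_{i+1})$. Their endpoints have indices of opposite parity, so no edge has both endpoints in $X$, and $X$ is independent. The even integers in $\{0,1,\ldots,n-1\}$ number exactly $\lceil n/2\rceil$, giving the required size.

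\emph{Upper bound.} Partition the vertex set into the blocks
$$\{v_0,v_1\},\ \{v_2,v_3\},\ \ldots,$$
together with the singleton $\{v_{n-1}\}$ when $n$ is odd. There are exactly $\lceil n/2\rceil$ blocks. Each two-element block $\{v_{2j},v_{2j+1}\}$ spans the directed edge from $v_{2j}$ to $v_{2j+1}$, so an independent set contains at most one vertex from it. A singleton block trivially contributes at most one vertex. Summing over the blocks, any independent set has at most $\lceil n/2\rceil$ elements.

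Combining the two bounds gives the lemma. There is no genuine obstacle here; the only care needed is the parity bookkeeping for odd $n$, where the last vertex forms its own block. As an alternative to the pairing, one could run a short induction on $n$: removing $v_0$ and $v_1$ loses at most one vertex of an independent set.
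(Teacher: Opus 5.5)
Your proof is correct, and it differs from the paper's only in how you obtain the upper bound.

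\textbf{The paper's route.} The paper argues by induction on $n$, splitting according to whether the last vertex $v_{n-1}$ lies in $X$:
\begin{itemize}
\item if $v_{n-1}\notin X$, then $X$ is independent in $P_{n-1}$;
\item if $v_{n-1}\in X$, then $v_{n-2}\notin X$, so $X\setminus\{v_{n-1}\}$ is independent in $P_{n-2}$. This gives $|X|\le 1+\lceil (n-2)/2\rceil=\lceil n/2\rceil$.
\end{itemize}

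\textbf{Your route.} You instead partition the vertices into $\lceil n/2\rceil$ blocks. Each block is either an edge $\{v_{2j},v_{2j+1}\}$ or a single vertex, so each meets an independent set in at most one vertex.

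\textbf{Comparison.} Your argument is non-inductive and gives an explicit certificate for the bound: a cover of $P_n$ by $\lceil n/2\rceil$ edges and vertices. This makes the parity bookkeeping transparent. The paper's induction is closer in spirit to the leaf-removal reasoning it uses later in Lemma~\ref{lem:forest_nonleaves}, where a vertex placed in $X$ forces its neighbour out of $X$. You also prove attainment within the argument itself, whereas the paper only remarks after the proof that the even-indexed vertices achieve the bound.
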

\begin{proof}
We prove the lemma by induction on $n$. When $n=1$ or $n=2$, the result is trivial. Suppose, as an inductive hypothesis, that the lemma holds for $P_n$ with $n\leq s$, for some integer $s\geq 2$. Let $X$ be an independent set in $P_{s+1}$. We need to show that $|X|\leq \lceil (s+1)/2\rceil$.
If $v_s\notin X$, then $X$ is an independent set in $P_s$. So by our inductive hypothesis, $|X| \leq \lceil s/2\rceil\leq \lceil (s+1)/2\rceil$, as required.
If $v_s\in X$, then $v_{s-1}\notin X$ (as there is an edge from $v_{s-1}$ to $v_s$, and $X$ is an independent set). Hence $X=\{v_s\}\cup X_1$, where $X_1$ is an independent set in $P_{s-1}$. By our inductive hypothesis,
\[
|X|=1+|X_1|\leq 1+\lceil (s-1)/2\rceil=\lceil (s+1)/2\rceil.
\]
So the lemma holds for $P_{s+1}$. The lemma now follows by induction on~$s$.
\end{proof}
We observe that the bound in Lemma \ref{lem:path} is attained by (for example) the set $\{v_i: i{\text{ is even}}\}$. 
\begin{lemma}
\label{lem:forest_nonleaves}
Let $c$ be a non-negative integer. Let $D$ be a root-directed forest. Suppose $D$ contains an independent set of cardinality $|D|-c$. Then
there are at most $2c$ non-leaves in $D$.
\end{lemma}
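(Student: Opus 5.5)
The plan is to count the non-leaves by splitting them according to whether or not they lie in the independent set. Let $X$ be an independent set in $D$ with $|X|=|D|-c$, and put $Y=D\setminus X$, so that $|Y|=c$. Since $D$ is a root-directed forest, every vertex has out-degree at most $1$: it is $0$ for a root and $1$ otherwise. So a vertex is a non-leaf exactly when its in-degree is at least $1$, that is, when some vertex $w$ has a directed edge $(w,v)$. For a non-leaf $v$, call any such $w$ a \emph{child} of $v$.

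The key steps are as follows.

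\begin{enumerate}
\item Divide the non-leaves into those lying in $Y$ and those lying in $X$. There are trivially at most $|Y|=c$ non-leaves in $Y$.
\item Let $v$ be a non-leaf lying in $X$, and choose a child $w$ of $v$. Since $(w,v)$ is an edge and $X$ is independent, $w\notin X$, so $w\in Y$. Define $\phi(v)=w$; this gives a map $\phi$ from the non-leaves in $X$ into $Y$.
\item Show that $\phi$ is injective. Each vertex $w$ has out-degree at most $1$, so $w$ is a child of at most one vertex. Hence distinct non-leaves cannot share a child, and in particular $\phi(v)=\phi(v')$ forces $v=v'$.
\item Conclude that there are at most $|Y|=c$ non-leaves in $X$. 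Together with step 1, $D$ has at most $2c$ non-leaves.
\end{enumerate}

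I do not expect a serious obstacle, and Lemma~\ref{lem:path} is not needed. The only points needing care are the degree conditions. First, a non-leaf in a root-directed forest really is characterised by having positive in-degree, because out-degree never exceeds $1$. Second, the injectivity in step 3 relies precisely on the out-degree-$1$ property of non-root vertices; it is the one place where the forest structure is genuinely used.
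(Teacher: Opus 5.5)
Your proof is correct, and it takes a genuinely different and much shorter route than the paper's.

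The paper argues by induction on $c$. It takes $L$ to be the set of leaves and $M$ the set of out-neighbours of leaves. An exchange argument lets it choose a maximum independent set $X$ with $L\subseteq X$, which forces $M\cap X=\emptyset$ and hence $|M|\le c$. Deleting $L\cup M$ gives a forest $D_0$ with at most $|M|$ leaves and an independent set missing at most $c-|M|$ of its vertices, and the inductive hypothesis is then applied to $D_0$.

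You replace all of this with a single counting step. Non-leaves outside $X$ number at most $c$, and non-leaves inside $X$ inject into $D\setminus X$ via a chosen in-neighbour, with injectivity coming from out-degree at most $1$. The paper uses the same injection idea, but only locally, to bound the number of leaves of $D_0$ by $|M|$.

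Your route never uses acyclicity, the maximality of $X$, or induction. It therefore works verbatim for any directed graph in which every vertex has out-degree at most $1$: independence rules out a loop at any $v\in X$, so the chosen in-neighbour still lies outside $X$. In particular it applies directly to $D_k(G)$. That would make the cycle-edge deletion in the proof of Lemma~\ref{lem:nonleaf} unnecessary and improve its bound from $3c$ to $2c$ non-leaves.

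The paper's induction does give a little extra structural information along the way: a maximum independent set can be chosen to contain every leaf and avoid every out-neighbour of a leaf. Nothing later in the paper relies on this.
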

\begin{proof}
We will prove the result by induction on $c$. For the base case $c=0$, note that when $D$ is, itself, an independent set there are no edges in $D$. So $D$ is a set of isolated vertices and thus every vertex is a leaf, as required.

Suppose, as an inductive hypothesis, that $c>0$ and the lemma holds for all smaller values of $c$.

Let $L$ be the set of leaves of $D$. Let $M$ be the set of vertices of $D$ that are adjacent to one or more leaves, so a vertex $w$ lies in $M$ if and only if there is a directed edge from $v$ to $w$ for some leaf $v\in L$. (See Figure~\ref{fig:root_directed_tree} for an example.) Note that every vertex $w\in M$ has in-degree at least $1$, and so $L\cap M=\emptyset$. Define $m=|M|$.

\begin{figure}
\begin{center}
\scalebox{0.5}{
\begin{tikzpicture}[shorten >=1pt,node distance=1cm,auto,
>=triangle 90]

\node[state] (p1) {};
\node[state] (p2)[above=of p1] {};
\node[state,fill=gray!30] (p3)[above=of p2] {};
\node[state] (p4)[above=of p3] {};
\node[state,fill=gray!30] (p5)[above=of p4] {};

\node[state] (e1)[ right=2.5cm of p2] {};

\node[state,fill=gray!30] (m3)[left=1.5cm of p3] {};
\node[state,fill=gray!30] (m4)[right=5.5cm of p3] {};

\node[state,fill=gray!90] (L10)[left=0.8cm of p4] {};
\node[state,fill=gray!30] (m2)[left=0.4cm of L10] {};
\node[state,fill=gray!90] (L12)[right=1.6cm of p4] {};
\node[state,fill=gray!90] (L11)[left=0.2cm of L12] {};
\node[state,fill=gray!90] (L13)[right=0.2cm of L12] {};

\node[state,fill=gray!90] (L14)[right=0.7cm of L13] {};
\node[state,fill=gray!90] (L15)[right=0.4cm of L14] {};

\node[state,fill=gray!30] (m1)[right=2.1cm of p5] {};

\node[state,fill=gray!90] (L3)[above=of p5] {};
\node[state,fill=gray!90] (L2)[left=0.2cm of L3] {};
\node[state,fill=gray!90] (L4)[right=0.2cm of L3] {};

\node[state,fill=gray!90] (L6)[above=of m1] {};

\node[state,fill=gray!90] (L9)[left=1.6cm of p5] {};
\node[state,fill=gray!90] (L8)[left=0.4cm of L9] {};

\draw[->] (L3) -> (p5);
\draw[->] (p5) -> (p4) ;
\draw[->] (p4) -> (p3);
\draw[->] (p3) -> (p2);
\draw[->] (p2) -> (p1);
\draw[->] (L2) -> (p5);
\draw[->] (L4) -> (p5);

\draw[->] (e1) -> (p1);
\draw[->] (m4) -> (e1);
\draw[->] (L14) -> (m4);
\draw[->] (L15) -> (m4);

\draw[->] (m3) -> (p2);
\draw[->] (m2) -> (m3);
\draw[->] (L10) -> (m3);
\draw[->] (L8) -> (m2);
\draw[->] (L9) -> (m2);

\draw[->] (L11) -> (p3);
\draw[->] (L12) -> (p3);
\draw[->] (L13) -> (p3);

\draw[->] (m1) -> (p4);
\draw[->] (L6) -> (m1);

\end{tikzpicture}
}

\caption{A root-directed tree, with leaves (the set $L$) shaded darkly, and vertices adjacent to leaves (the set $M$) shaded lightly. When the shaded vertices (in $L$ and $M$) are removed, we obtain a root-directed forest $D_0$ made up of two root-directed trees, with $1$ and $3$ vertices respectively.}
\label{fig:root_directed_tree}
\end{center}
\end{figure}

If $m=0$, all leaves have out-degree $0$ and so the directed forest $D$ is a set of isolated vertices: there is nothing to prove. So we may assume that $m>0$. 

Let $X$ be an independent set in $D$ of maximal size. Further, let $X$ be chosen to contain the largest number of leaves in $D$, subject to $X$ being an independent set of maximal size. We claim that $L\subseteq X$. To see this, let $v\in L$ be a leaf, so $v$ has in-degree $0$, and suppose for a contradiction that $v\not\in X$. If $v$ is a root, then $v$ has out-degree $0$ as well as in-degree~$0$: no directed edges involve $v$.  So $X\cup\{v\}$ is an independent set, which contradicts the maximality of $X$. Hence $v$ is not a root, and there is a directed edge from $v$ to $w$ for some $w\in M$. Note that $w\notin L$, as $L\cap M=\emptyset$. If $w\not\in X$, then $X\cup\{v\}$ is an independent set and again we have a contradiction as $X$ is a maximal independent set. So $w\in X$. But then $(X\setminus\{w\})\cup\{v\}$ is an independent set of maximal size containing more leaves than $X$. This contradicts our choice of $X$. So $v\in X$ for any $v\in L$ and thus $L\subseteq X$, as required.

The independent set $X$ above has cardinality at least $|D|-c$, and has the property that $L\subseteq X$. If $w\in M$, then there exists a directed edge from a leaf $v\in L$ to $w$. Since $X$ is an independent set containing $v\in L$, we deduce that $w\notin X$. Hence $M$ is contained in the complement of $X$ in $D$, and so $m=|M|\leq c$.

Let $D_0$ be the root-directed forest obtained by deleting the vertices in $L\cup M$. Every leaf $v$ in $D_0$ is at the end of a directed edge in $D$ from some vertex $z\in M$. (To see this, note that there must be a directed edge $(z,v)$ in $D$ for some vertex $z\in D$, since $v\notin L$. And $z\notin L$, since $v\notin M$. If $z\notin M$, then $z\in D_0$ and so $(z,v)$ is an edge in $D_0$. This contradicts the fact that $v$ is a leaf in $D_0$. So $z\in M$.) Since all vertices in $M$ have out-degree at most~$1$, the number of leaves in $D_0$ is at most $m$. 

The set $X_0=X\cap D_0$ is an independent set in $D_0$. Since $X$ contains $L$ but is disjoint from $M$, we see that
\begin{align*}
|X_0|&=|X|-|L|\geq |D|-c-|L|\\
&=|D_0|+m+|L|-c-|L|\\
&=|D_0|-(c-m).
\end{align*}
Since $m$ is positive, $c-m<c$. By our inductive hypothesis, $D_0$ contains at most $2(c-m)$ non-leaves. Since $D_0$ contains at most $m$ leaves, $|D_0|\leq 2(c-m)+m=2c-m$. But $D_0\cup M$ is the set of non-leaves in $D$, and so the number of non-leaves in $D$ is
\[
|D_0|+|M|\leq 2c-m+m=2c,
\]
establishing our inductive step. So the lemma now follows.
\end{proof}

We remark that the directed path on $2c+1$ vertices shows that Lemma~\ref{lem:forest_nonleaves} is tight.

\section{Proof of Theorem~\ref{thm:main}}
\label{sec:main_proof}

\begin{lemma}
\label{lem:exponent}
Let $k$ and $c$ be positive integers. Let $G$ be a finite group containing a $k$-power-avoiding set $X$ of cardinality at least $|G|-c$. Then the exponent of $G$ divides $k^{2c}\prod_{i=1}^{2c}(k^i-1)$. In particular, the exponent of $G$ is bounded by a function of $c$ and $k$.
\end{lemma}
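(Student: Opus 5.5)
Fix an element $g\in G$ of order $n$; it suffices to show that $n$ divides $k^{2c}\prod_{i=1}^{2c}(k^i-1)$. The plan is to examine the structure of $D_k(G)$ restricted to the orbit of $g$ under the map $x\mapsto x^k$. Consider the sequence $g, g^k, g^{k^2},\ldots$. Since $G$ is finite, this sequence is eventually periodic: there are a least $t\geq 0$ and a least $r\geq 1$ with $g^{k^{t+r}}=g^{k^t}$. The elements $g^{k^0},\ldots,g^{k^{t+r-1}}$ are then distinct, and the map $x\mapsto x^k$ sends them along a directed path of length $t$ into a directed cycle of length $r$.

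The first step is to show $t\leq 2c$ and $r\leq 2c$ (or a slightly better bound). Fix a $k$-power-avoiding set $X$ with $|X|\geq |G|-c$. Its complement $Y=G\setminus X$ has $|Y|\leq c$, and every edge $(x,x^k)$ of $D_k(G)$ has at least one endpoint in $Y$. The vertices $g^{k^i}$ for $0\leq i<t+r$ lie on a subgraph of $D_k(G)$ consisting of a directed path $P_{t+1}$ (from $g$ to $g^{k^t}$) feeding into a cycle of length $r$. Restrict $X$ to these vertices. By Lemma~\ref{lem:path}, an independent set on a directed path of $m$ vertices misses at least $\lfloor m/2\rfloor$ of them; the same argument on a cycle of length $r$ shows at least $\lceil r/2\rceil$ vertices are missed. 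Hence $\lfloor (t+r)/2\rfloor\leq c$ roughly, giving $t+r\leq 2c+1$. If $r=1$ and the cycle is a loop, the loop vertex itself must be missed, which is consistent with this. The crude bounds $t\leq 2c$ and $r\leq 2c$ are all I will use; Lemma~\ref{lem:forest_nonleaves} could alternatively be applied to the forest obtained by cutting cycles, but the direct path count should suffice.

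The second step is to convert these bounds into a divisibility statement. From $g^{k^{t+r}}=g^{k^t}$ we get $n\mid k^t(k^r-1)$. Since $t\leq 2c$ and $1\leq r\leq 2c$, $k^t$ divides $k^{2c}$ and $k^r-1$ is one of the factors in $\prod_{i=1}^{2c}(k^i-1)$. Therefore $n$ divides $k^{2c}\prod_{i=1}^{2c}(k^i-1)$. The exponent of $G$ is the lcm of element orders, so it divides this number as well. If $k^r-1=0$ (i.e.\ $k=1$), the statement degenerates; the hypothesis that $k\geq 2$ from the definition of $s_k$ handles this case.

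The main obstacle is the bookkeeping in the first step: making sure the lengths $t$ and $r$ are each bounded by $2c$. The subtle case is when $c$ is small and the cycle is a loop at the identity, since a loop forces its vertex out of $X$. I would handle this by counting, on the disjoint union of the path part and the cycle part, the edges that must each be hit by the at most $c$ vertices of $Y$. Each vertex of $Y$ covers at most two consecutive edges of the walk $[g]$. The walk has $t+r$ distinct edges, including the closing edge of the cycle, so $t+r\leq 2c$.
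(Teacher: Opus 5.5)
Your argument is correct and is essentially the paper's: bound the length of the orbit $g,g^k,g^{k^2},\ldots$ using Lemma~\ref{lem:path}, then read off $|g|\mid k^t(k^r-1)$.

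Keep your separate count on the cycle ($\lceil r/2\rceil\le c$, hence $r\le 2c$), because it is genuinely needed. The paper bounds only the orbit size $n=t+r\le 2c+1$ and then claims that $|g|$ divides $k^{n-1}\prod_{i=1}^{n-1}(k^i-1)$. That claim tacitly needs $m-\ell\le n-1$, which fails for a purely periodic orbit: there the only repetition is $\ell=0$, $m=n$. For example, with $G=C_3$ and $k=2$ we get $n=2$, but $|g|=3\nmid 2$. Your bound $r\le 2c$ is exactly what keeps $k^r-1$ among the factors of $\prod_{i=1}^{2c}(k^i-1)$ in that case.

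Your last paragraph, however, is wrong as stated. Suppose $t\ge 1$ and $r\ge 2$. The entry vertex $g^{k^t}$ is the head of the two edges from $g^{k^{t-1}}$ and from $g^{k^{t+r-1}}$, and the tail of the edge to $g^{k^{t+1}}$. So it covers three edges of the walk, not two. For instance, with edges $a\to b$, $b\to c$, $c\to b$, the single vertex $b$ covers all of them. The edge count therefore only yields $t+r\le 2c+1$.

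The inequality $t+r\le 2c$ does hold in a group, but only because the loop at the identity always uses up one element of $G\setminus X$, and your count does not mention this ingredient. Since you only use $t\le 2c$ and $r\le 2c$, which your second paragraph already establishes, simply delete that final paragraph.
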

\begin{proof}
Let $g\in G$. Define a sequence $g_0,g_1,\ldots ,$ of elements of $G$ by $g_0=g$ and $g_{i+1}=g_i^k$ for all $i\geq 0$. Let $P=\{g_i:i\geq 0\}\subseteq G$. Define $n=|P|$. 

Add a directed edge from $g_i$ to $g_{i+1}$ for $0\leq i<n-1$, so $P$ becomes a directed path. Since $X$ is $k$-power-avoiding and all edges in $P$ are from $x\in P$ to $x^k\in P$, we see that $X\cap P$ is an independent set in $P$. Since $|X|\geq |G|-c$, at most $c$ elements of $G$ lie outside $X$ and so at most $c$ elements of $P$ lie outside $X$. Hence $|X\cap P|\geq |P|-c=n-c$. By Lemma~\ref{lem:path}, the independent set $X\cap P$ of $P$ has cardinality at most $\lceil n/2\rceil$. Hence $n-c\leq \lceil n/2\rceil$ and therefore $n\leq 2c+1$.

By the pigeonhole principle, the sequence $g_0,g_1,\ldots ,g_n$ of elements of $P$ must contain two equal elements: $g_\ell=g_m$ for integers $\ell$ and $m$ where  $0\leq \ell<m\leq n$. Hence $g^{k^\ell}=g^{k^m}$, and the order of $g$ divides $k^m-k^\ell$. Since $k^m-k^\ell$ divides $k^{n-1}(k^{m-\ell}-1)$, the order of $g$ divides $k^{n-1}\prod_{i=1}^{n-1}(k^i-1)$.  Since $n\leq 2c+1$, we see that the order of $g$ divides  $k^{2c}\prod_{i=1}^{2c}(k^i-1)$  for all $g\in G$, and so the exponent of $G$ divides $k^{2c}\prod_{i=1}^{2c}(k^i-1)$ as required.
\end{proof}

\begin{lemma}
\label{lem:nonleaf}
Let $k$ and $c$ be positive integers. Let $G$ be a finite group containing a $k$-power-avoiding set $X$ of cardinality at least $|G|-c$. Then the directed $k$th power graph $D_k(G)$ of $G$ has at most $3c$ non-leaves.
\end{lemma}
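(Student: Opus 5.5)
The plan is to reduce to Lemma~\ref{lem:forest_nonleaves} by cutting the cycles of $D_k(G)$. Since every vertex of $D_k(G)$ has out-degree exactly~$1$ (the edge $g\to g^k$), each connected component consists of a single directed cycle, possibly a loop, with root-directed trees hanging off it. Let $C$ be the set of vertices lying on directed cycles; these are the elements $g$ with $g^{k^m}=g$ for some $m\geq 1$.

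\textbf{Step 1: $|C|\leq c$.} Each cycle of length $\ell\geq 1$ can contain at most $\lfloor \ell/2\rfloor$ vertices of $X$, since $X$ is independent. A loop contains no vertex of $X$. So each cycle contributes at least $\lceil \ell/2\rceil\geq \ell/2$ vertices to $G\setminus X$, which has size at most $c$. This gives $|C|\leq 2c$, which is not good enough for the target $3c$. I would rather not bound $|C|$ directly. Instead I will count the non-leaves on cycles together with the cycle vertices that lie outside $X$.

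\textbf{Step 2: delete one vertex from each cycle.} For each cycle, choose one vertex $v$ on it with $v\notin X$. Such a vertex exists: for a loop it is automatic, and for a longer cycle $X$ cannot contain every vertex of the cycle. Delete the edge out of $v$, making $v$ a root. Let $D'$ be the resulting graph. It has no directed cycles, every non-root vertex has out-degree~$1$, and so $D'$ is a root-directed forest. Moreover $X$ is still independent in $D'$, with $|X|\geq |D'|-c$. By Lemma~\ref{lem:forest_nonleaves}, $D'$ has at most $2c$ non-leaves.

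\textbf{Step 3: compare leaves of $D_k(G)$ and $D'$.} Deleting an edge $(v,v^k)$ changes only the in-degree of $v^k$ and the out-degree of $v$. A vertex $u\neq v^k$ therefore has the same in-degree in both graphs. Its out-degree is at most~$1$ in both, so $u$ is a leaf of $D_k(G)$ exactly when it is a leaf of $D'$; for $u=v$ the leaf condition only requires out-degree at most~$1$, so this case is fine too. The only vertices whose status can change are the heads $v^k$ of the deleted edges. Each such vertex has in-degree~$1$ in $D_k(G)$ from $v$, so it is a non-leaf there, while it might become a leaf in $D'$. There is one such vertex per cycle, and the number of cycles is at most the number of chosen vertices $v\notin X$, which is at most $c$. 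Hence the number of non-leaves of $D_k(G)$ is at most $2c+c=3c$.

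The main obstacle is being careful with the definitions here: that a root with in-degree~$0$ counts as a leaf, and that distinct cycles give distinct chosen vertices $v$, so that the count of extra non-leaves is really at most $c$. Both are immediate from the disjointness of the cycles.
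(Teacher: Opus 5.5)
Your argument is correct and is essentially the paper's proof. The paper likewise bounds the number of cycles by $c$, deletes one edge from each cycle to get a root-directed forest, applies Lemma~\ref{lem:forest_nonleaves} to get at most $2c$ non-leaves, and adds back at most one extra non-leaf (the head of the deleted edge) per cycle.

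The differences are cosmetic. The paper cuts each cycle at an arbitrary edge rather than at a vertex outside $X$. Also, your Step~1 heading ``$|C|\leq c$'' is false in general: for $G=C_3$, $k=2$ and $c=2$ there are three cycle vertices. Your argument never uses that claim, so nothing breaks, but the heading should be corrected.
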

\begin{proof}
The set $X$ is an independent set of $D_k(G)$, and at most $c$ vertices in $D_k(G)$ lie outside $X$.

Since every vertex of $D_k(G)$ has out-degree $1$, we see that all directed cycles in $D_k(G)$ are disjoint, and the only cycles in $D_k(G)$ are directed cycles.

Suppose that $D_k(G)$ contains $z$ directed cycles. No cycle can be entirely contained in an independent set, and so there are at
least $z$ elements of $D_k(G)$ which lie outside $X$. Hence $z\leq c$.

Order the $z$ directed cycles in $D_k(G)$ in some arbitrary way. Choose vertices $r_i$ and $s_i$ in the $i$th cycle (where $1\leq i\leq z$) such that there is a directed edge from $r_i$ to $s_i$. (We have $r_i=s_i$ when the cycle is a loop.)  Let $F$ be the directed graph $D_k(G)$ with the directed edges $r_i$ to $s_i$ removed for $1\leq i\leq z$. We see that the elements $r_i$ have out-degree $0$ in $F$, and all other vertices have out-degree $1$.  Since $D_k(G)$ is finite and all vertices have out-degree~$1$ in $D_k(G)$, the unique directed walk in $D_k(G)$ starting at any vertex ends up traversing a cycle; terminating this walk at the vertex $r_i$ on the cycle, we deduce that there is a unique directed path in $D_k(G)$ from any vertex to one of the vertices~$r_i$. Note that the edge from $r_i$ to $s_i$ is not used in this path, so the path is in $F$. For each vertex $r_i$, the set of vertices with directed paths to $r_i$ forms a root-directed tree with root $r_i$. Hence $F$ is a root-directed forest, with roots $r_1,r_2,\ldots,r_z$. 

The set $X$ is an independent set in $F$, as we have removed edges to construct $F$ from $D_k(G)$. By Lemma~\ref{lem:forest_nonleaves}, $F$ contains at most $2c$ non-leaves. A non-leaf in $D_k(G)$ is either a non-leaf in $F$, or is one of the vertices~$s_i$. Since there are $z$ vertices $s_i$, the directed graph $D_k(G)$ has at most $2c+z$ non-leaves. Since $z\leq c$, we see that  $D_k(G)$ has at most $3c$ non-leaves, as required. 
\end{proof}

\begin{proof}[Proof of Theorem~\ref{thm:main}]
Let $k$ and $c$ be positive integers. Let $G$ be a finite group containing a $k$-power-avoiding set $X$ of cardinality at least $|G|-c$.

Define the subset $Y=\{g^k:g\in G\}$. The elements of $Y$ are precisely the non-leaves in $D_k(G)$, and so $|Y|\leq 3c$ by Lemma~\ref{lem:nonleaf}. Let $N$ be the subgroup of $G$ generated by $Y$. Since $Y$ is a characteristic subset of $G$, we see that $N$ is a characteristic subgroup of $G$. Moreover, $N$ contains all $k$th powers in $G$, and so $G/N$ has exponent dividing $k$.

Since $N$ is a subgroup of $G$, $N$ is finite and the exponent of $N$ divides the exponent of $G$. The exponent of $N$ is bounded as a function of~$c$ and~$k$ by Lemma~\ref{lem:exponent}. The subgroup $N$ is generated by at most $3c$ elements. By Zelmanov's solution~\cite{Zelmanov1,Zelmanov2} to the restricted Burnside problem (see~\cite{VaughanLee} or~\cite{Zelmanov3}), the order of $N$ is bounded by a function of $c$ and $k$. So the theorem follows.
\end{proof}

Zelmanov's solution to the restricted Burnside problem is a deep result. His proof for general exponents uses Hall and Higman's reduction~\cite{HallHigman} to the case of prime power exponents; the classification of finite simple groups is currently needed for this reduction to be applied. A natural question is: can Theorem~\ref{thm:main} be proved more directly?

\section{When $s_k(G)$ is large}
\label{sec:c=2}

For a given $c$ we can ask about the groups $G$ for which $s_k(G)=|G|-c$. The $c=1$ case is easy for every $k$: the only way it can happen is for $X=G\setminus\{1\}$, and that is possible if and only if $\exp(G)$ divides $k$. We state this as a theorem.
\begin{theorem}
    \label{thm:c_equal_1}
    Let $k$ be a positive integer. A finite group $G$ has $s_k(G)=|G|-1$ if and only if the exponent $\exp(G)$ of $G$ divides $k$.
\end{theorem}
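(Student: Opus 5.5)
The proof will establish two facts. First, $s_k(G)\leq |G|-1$ for every finite group $G$. Second, equality holds exactly when $G\setminus\{1\}$ is $k$-power-avoiding, which in turn happens exactly when $\exp(G)$ divides $k$.

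For the upper bound, note that $1^k=1$, so $D_k(G)$ has a loop at the identity. An independent set cannot contain a vertex carrying a loop, so every $k$-power-avoiding set omits $1$. Hence $s_k(G)\leq |G|-1$ always.

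Moreover, if $X$ is $k$-power-avoiding with $|X|=|G|-1$, then $X$ omits exactly one element, which must be $1$. So $X=G\setminus\{1\}$. We conclude that $s_k(G)=|G|-1$ if and only if $G\setminus\{1\}$ is $k$-power-avoiding.

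It remains to show that $G\setminus\{1\}$ is $k$-power-avoiding if and only if $\exp(G)$ divides $k$.

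Suppose first that $\exp(G)$ divides $k$. Then $g^k=1\notin X$ for every $g\in G$, so no pair $\{g,g^k\}$ lies in $X$. This is the $N=\{1\}$ case of the remark following Theorem~\ref{thm:main}.

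Conversely, suppose $X=G\setminus\{1\}$ is $k$-power-avoiding, and let $g\neq 1$. Then $g\in X$, so $g^k\notin X$, which forces $g^k=1$. Trivially $1^k=1$ as well. So every element satisfies $g^k=1$, and $\exp(G)$ divides $k$.

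The argument is entirely elementary, and I expect no real obstacle. The one point needing care is the observation that the loop at $1$ forces $1\notin X$; this is what pins down the unique candidate set $X=G\setminus\{1\}$.
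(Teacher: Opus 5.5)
Your proof is correct and follows the same route as the paper. The loop at $1$ forces $1\notin X$, so the only candidate set of size $|G|-1$ is $G\setminus\{1\}$, and that set is $k$-power-avoiding exactly when $g^k=1$ for all $g\in G$; you simply spell out the paper's one-line argument in full.
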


The rest of this section looks at the case when $s_k(G)=|G|-2$. We first (Theorem~\ref{thm:c=2}) provide a classification for groups which occur in this case. In the following subsection, we give a more detailed classification (Corollary~\ref{cor:k_odd_prime} and Theorem~\ref{thm:k=2_classification}) when $k$ is prime.

\subsection{The case $s_k(G)=|G|-2$}

\begin{theorem}\label{thm:c=2}
Let $G$ be a finite group and $k$ a positive integer. Then $s_k(G)=|G|-2$ if and only if one of the following holds.
\begin{enumerate}
	\item[(a)] $k$ is odd and $G\cong H\times C_2$, where $H$ is any group of exponent dividing $k$. (In this case $G$ contains a unique involution $g$, and $G\setminus\{1,g\}$ is $k$-power-avoiding.)
	\item[(b)] $k\equiv 2\mod{3}$ and $G$ is cyclic of order 3. (Any non-identity element of $G$ is a suitable $k$-power-avoiding set.)
	\item[(c)] $k\equiv 2\mod{6}$ and $G$ is dihedral of order 6. (If $g$ is any element of order 3 in $G$, then $G\setminus\{1,g\}$ is $k$-power-avoiding.)
	\item[(d)] $k$ is even, $G$ is a central extension of a group $H$, of exponent dividing $k$, by $C_2$, and $\exp(G)$ does not divide $k$. (In this case there is a central subgroup $\{1,g\}$, and $G\setminus\{1,g\}$ is $k$-power-avoiding.)
\end{enumerate}
\end{theorem}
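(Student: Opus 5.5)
We first reduce to a statement about a single element. By Theorem~\ref{thm:c_equal_1}, $s_k(G)=|G|-2$ holds if and only if $\exp(G)\nmid k$ and $G$ has a $k$-power-avoiding set of size $|G|-2$. For the converse direction, in each of (a)--(d) we check directly that the displayed set is $k$-power-avoiding, and that $\exp(G)\nmid k$. The latter holds because $2\nmid k$ in (a), $3\nmid k$ in (b) and (c), and it is assumed in (d).

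For the forward direction, let $X$ be $k$-power-avoiding with $|X|=|G|-2$. The identity carries a loop in $D_k(G)$, so $1\notin X$, and hence $X=G\setminus\{1,g\}$ for some $g\neq 1$. Independence of $X$ says exactly that $h^k\in\{1,g\}$ for every $h\notin\{1,g\}$. Consequently the set $Y$ of $k$th powers satisfies $Y\subseteq\{1,g,g^k\}$. Since $Y$ is a union of conjugacy classes and $1$ is a class by itself, $Y\setminus\{1\}$ has at most two elements, each with at most two conjugates. We now split according to whether $g^k\in\{1,g\}$.

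\emph{Case $g^k\in\{1,g\}$.} Then $Y\subseteq\{1,g\}$, and $Y\neq\{1\}$ because $\exp(G)\nmid k$. So $Y=\{1,g\}$, and $g$ is fixed by conjugation, i.e.\ $g$ is central. Let $m>1$ be the order of $g$. If $m>2$ then $g^2\notin\{1,g\}$, so $(g^2)^k\in\{1,g\}$. If $g^k=g$ this gives $g^2\in\{1,g\}$, which is impossible, so in that subcase $m=2$. If $g^k=1$, then $\langle g\rangle=\langle Y\rangle$ is generated by $k$th powers; I would run the same argument on the cyclic group $\langle g\rangle$, whose only $k$th powers lie in $\{1,g\}$, to force $m=2$. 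In all cases $N=\{1,g\}$ is a central subgroup of order $2$ and $G/N$ has exponent dividing $k$.
\begin{itemize}
\item If $k$ is odd, then $|G/N|$ is odd. By Schur--Zassenhaus, or simply by taking $H$ to be the set of elements of odd order, which is a subgroup because $N$ is central of order $2$, we get $G\cong H\times C_2$. This is (a).
\item If $k$ is even, then $G$ is a central extension as in (d).
\end{itemize}

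\emph{Case $x:=g^k\notin\{1,g\}$.} Now restrict to $C=\langle g\rangle$ of order $n$ and write the $k$-power map additively on $\mathbb{Z}_n$. For every $j\not\equiv 0,1$ we have $kj\in\{0,1\}$, and the image $k\mathbb{Z}_n$ is a subgroup contained in $\{0,1,k\}$.
\begin{itemize}
\item If $1$ lies in the image, then $k$ is a unit modulo $n$, so the image is all of $\mathbb{Z}_n$ and $n\le 3$. The conditions $x\neq 1,g$ then force $n=3$ and $k\equiv 2\pmod 3$.
\item Otherwise the image is $\{0,k\}$, so $x=g^{n/2}$ is an involution.
\end{itemize}
Returning to $G$, we have $Y\subseteq\{1,g,x\}$ with $x\in Y$. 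The main obstacle is this final step: using that $Y$ is a union of conjugacy classes, that every element outside $\{1,g\}$ has $k$th power in $\{1,g\}$, and that $g$ itself need not be a $k$th power, to pin down $G$. I expect the following.
\begin{itemize}
\item The involution subcase ($x=g^{n/2}$) collapses. Either $x$ is a central involution playing the role of $g$ in the first case, which lands back in (a) or (d) after swapping the roles of $g$ and $x$, or we get a contradiction because $g^k\notin\{1,g\}$.
\item In the order-$3$ subcase, $\langle Y\rangle=\langle g\rangle\cong C_3$ is normal and $G/C_3$ has exponent dividing $k$. Every element outside $C_3$ has $k$th power in $\{1,g\}$, while $g^{-1}=g^k\notin\{1,g\}$ and so must not be a $k$th power of anything outside $\{1,g\}$. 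Counting then shows $G=C_3$, or $G=S_3$ when $k$ is even; this is (b) and (c), and the condition $k\equiv 2\pmod 6$ in (c) comes from the involutions needing $h^k=1$.
\end{itemize}
I would handle this by a direct count of the elements $h$ with $h^k=g$ versus $h^k=1$ inside $G/C_G(C_3)$.
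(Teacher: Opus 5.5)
Your reduction via Theorem~\ref{thm:c_equal_1}, the converse checks, and the case $g^k\in\{1,g\}$ are essentially sound. Splitting on where $g^k$ lies is a different route from the paper, which splits on whether some prime divisor of $\exp(G)$ fails to divide $k$. Your split is a neat way to get (a) and (d) together. One repair is needed there. In the subcase $g^k=1$, rerunning the argument inside $\langle g\rangle$ gives nothing, since $(g^2)^k=1$. Instead use that $Y$ is closed under powers: writing $g=h^k$, you get $g^2=(h^2)^k\in Y\subseteq\{1,g\}$, so $g^2=1$. This settles both subcases at once.

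The genuine gap is the case $g^k\notin\{1,g\}$. Beyond the computation inside $\langle g\rangle$ you only record what you expect, so (b) and (c) are never actually derived, and the sketch you give is partly off target.

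\emph{Involution subcase.} You cannot `swap the roles of $g$ and $x$', because $X=G\setminus\{1,g\}$ is fixed. In fact this subcase is impossible outright. Here $n\ge 4$, so $g^{-1}\notin\{1,g\}$, while $(g^{-1})^k=x^{-1}=x\notin\{1,g\}$, contradicting independence. More simply, applying the independence condition to $g^{-1}$ and $g^2$ shows directly that $g^k=g^{-1}$ and $|g|=3$.

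\emph{Order-$3$ subcase.} The missing step is to prove $C_G(g)=\langle g\rangle$, and the unspecified `count in $G/C_G(C_3)$' does not do this. Here is one way. If $h\in C_G(g)\setminus\langle g\rangle$, then $h,hg\in X$. So $h^k\in\{1,g\}$ and $h^kg^{-1}=(hg)^k\in\{1,g\}$, forcing $h^k=g$. The same applies to $h^{-1}$, giving $g^{-1}=(h^{-1})^k=g$, a contradiction. The paper argues similarly: it uses that $\langle g\rangle$ is the unique Sylow $3$-subgroup to pick $h\in C_G(g)$ of prime order $q\neq 3$, so that $h^k=1$ and $(hg)^k=g^2$. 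Now $\langle g\rangle=\langle Y\rangle$ is normal, so $G/C_G(g)$ embeds in $\Aut(\langle g\rangle)$, giving $|G:C_G(g)|\le 2$. Hence $G\cong C_3$ or $G$ is dihedral of order $6$. In the dihedral case an involution $t$ satisfies $t^k\in\{1,g\}\cap\langle t\rangle=\{1\}$, so $k$ is even and $k\equiv 2\pmod 6$.

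Without an argument of this kind, the forward direction is incomplete.
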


We will need the following standard lemma for the proof of Theorem~\ref{thm:c=2}.

\begin{lemma}\label{lem:twiceodd}
Suppose $G$ is a group of order $2m$ where $m$ is odd. If $G$ has a central involution, then $G$ is isomorphic to the direct product of a group of order $m$ with the cyclic group of order 2.
\end{lemma}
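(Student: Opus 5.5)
We prove Lemma~\ref{lem:twiceodd} by producing a complement to the central involution and then checking that the resulting semidirect product is in fact direct. Let $z$ be the central involution, so $Z=\langle z\rangle$ is a normal subgroup of order $2$.

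The first step is to find a normal subgroup $K$ of order $m$. The cleanest route is the Schur--Zassenhaus theorem: $|Z|=2$ and $|G:Z|=m$ are coprime, so $Z$ has a complement. For an elementary proof avoiding that theorem, use the regular representation $\rho\colon G\to S_{2m}$. Since $z$ has order $2$ and acts without fixed points, $\rho(z)$ is a product of $m$ disjoint transpositions. Because $m$ is odd, $\rho(z)$ is an odd permutation. Hence $K=\{g\in G:\rho(g)\text{ is even}\}$ is a subgroup of index exactly $2$, so $|K|=m$ and $K$ is normal.

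Next, $z\notin K$ because $\rho(z)$ is odd. Since $|K|$ is odd, $K$ contains no involution anyway, so $K\cap Z=1$. Then $|KZ|=|K||Z|=2m=|G|$, giving $G=KZ$.

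Finally, $K$ and $Z$ are both normal, intersect trivially and generate $G$. Moreover $Z$ is central, so elements of $K$ commute with $z$. It follows that $G\cong K\times Z\cong K\times C_2$, where $K$ has order $m$.

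The only step with any content is the construction of $K$, and in particular the check that the permutation $\rho(z)$ is odd. That check is exactly where the hypothesis that $m$ is odd enters. Centrality of $z$ is used only at the end, to upgrade the semidirect product to a direct product.
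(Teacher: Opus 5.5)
Your proof is correct and follows essentially the same route as the paper: composing the left regular representation with the sign map gives a normal subgroup $K$ of index $2$. This works because the central involution acts as a product of $m$ disjoint transpositions, hence as an odd permutation, and then $K$ and $\langle z\rangle$ give the direct decomposition. Your aside about Schur--Zassenhaus is unnecessary but harmless, since any complement to a central subgroup is automatically normal.
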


\begin{proof}
The action of $G$ on itself by left multiplication induces a homomorphism $f$ from $G$ to $S_G$. Combining this with the standard homomorphism from $S_G$ to $\{1,-1\}$ given by $\sigma\mapsto\text{sgn}(\sigma)$ for $\sigma\in S_G$ gives a homomorphism $\theta:G\rightarrow\{1,-1\}$. Let $g$ be a central involution in $G$. Its image $f(g)$ in $S_G$ consists of $m$ 2-cycles $(x,gx)$. Since $m$ is odd, we see that $\text{sgn}(f(g))=-1$. Hence $\operatorname{im}(\theta)=\{1,-1\}$. Therefore $\ker(\theta)$ is a subgroup $H$ of index 2 in $G$. Since $H$ has order $m$, and $m$ is odd, we have that $H$ and $\langle g\rangle$ are normal subgroups with trivial intersection whose product is $G$. Therefore $G\cong H\times\langle g\rangle\cong H\times C_2$, as required.
\end{proof}

\begin{proof}[Proof of Theorem~\ref{thm:c=2}]
Suppose $s_k(G)=|G|-2$. Then $\exp(G)$ does not divide~$k$. Therefore, either $\exp(G)$ has a prime factor $p$ which does not divide $k$, or there is a prime $p$ and a positive integer $\ell>1$ such that $p^{\ell-1}$ divides $k$, $p^\ell$ does not divide $k$, and $p^\ell$ divides $\exp(G)$.

Assume first that $\exp(G)$ has a prime factor $p$ which does not divide $k$. Let $x$ be an element of order $p^r$ in $G$ ($r\geq 1$). At least one such element exists.
Since $k$ and $p$ are coprime, $x^{k^i}$ also has order $p^r$, for all $i$. Thus $x$ lies in a directed cycle of $D_k(G)$, and this cycle is not the identity loop. Therefore $D_k(G)$ contains at least two directed cycles. Since no cycle can lie entirely in an independent set, there must be exactly two cycles. One is the loop containing the identity element. The remaining cycle $C$ must contain not just $x$ but all non-identity elements of $p$-power order. Moreover, $C$ must also contain all non-identity elements of $p'$-power order for any other prime factors $p'$ of $\exp(G)$ that do not divide $k$. But elements of $C$, because $C$ contains $x$, must all lie in $N:=\langle x\rangle$ and have the same order as $x$. Thus $p$ is the unique prime factor of $\exp(G)$ (and hence of $|G|$) that does not divide $k$. Since $G$ (and therefore $C$) must contain an element of order $p$, we see that $x$ must have order $p$. Therefore $N\cong C_p$ and in fact $N$ is the unique Sylow $p$-subgroup of $G$. In particular $\exp(G)=pe$, where $e$ is coprime to $p$, and all prime factors of $e$ divide $k$. The non-identity elements of $N$ constitute a single cycle of $D_k(G)$, and any $k$-power-avoiding set $X$ in $G$ of cardinality $|G|-2$ must be $G\setminus\{1,g\}$ for some element $g$ of this cycle (and note that $\langle g\rangle=N$).

Let $t$ be a prime-power factor of $e$ and let $y$ be an element of order $t$. One of $y$ and $y^k$ must lie outside $X$. As all elements in $C$ have order $p$, we see that $y\not\in C$ and $y^k\not\in C$. Since $y\not=1$, we must have $y^k=1$ and so $t$ divides $k$. We deduce that $e$ divides $k$. 

At most half the elements of any directed cycle are contained in any independent set. Therefore $C$ has at most two vertices.

Suppose that $C$ has exactly one vertex. Then $p=2$, $k$ is odd, there is a unique involution (which is therefore central), and $|G|=2m$ where $m$ is odd. By Lemma~\ref{lem:twiceodd}, $G\cong H\times C_2$ where $|H|=m$. We have $\exp(G)=2\exp(H)$, and so $\exp(H)=e$. Since $e$ divides $k$, this is case (a) of the theorem.

Suppose now that $C$ has two vertices. Then $p=3$, the vertices are the non-identity elements of $N$, namely $g$ and $g^2$, and so $g^k=g^2$. That is, $k\equiv 2\mod{3}$. Since $N$ is normal, the conjugacy class of $g$ contains at most two elements. Thus $C_G(g)$ has index at most 2 in $G$. Suppose $C_G(g)\neq\langle g\rangle$. Then $|C_G(g)|$ has a prime factor $q\neq 3$. Let $h$ be an element of order $q$ in $C_G(g)$. Since all primes other than $3$ that divide $\exp(G)$ must divide $k$, we have that $h^k=1$. But then $(hg)^k=g^2$, contradicting the fact that $hg$ and $g^2$ are both contained in $X$. Therefore $C_G(g)=\langle g\rangle$, and has index at most 2 in $G$.

There are two possibilities. Either $G=C_G(g)\cong C_3$, or $G$ is dihedral of order 6. In both cases $g$ has order 3 and $X=G\setminus\{1,g\}$. The fact that $k\equiv 2\mod{3}$ is both necessary and sufficient for $X$ to be an independent set when $G\cong C_3$. This is case (b) of the theorem. However when $G$ is dihedral, if $k$ is odd then each involution of $G$ forms a directed loop in $D_k(G)$, meaning that $D_k(G)$ would have five directed cycles. Therefore $k$ must be even and congruent to 2 modulo 3; that is, $k\equiv 2\mod{6}$. A quick check shows that when $k\equiv 2\pmod{6}$, the set $G\setminus\{1,g\}$ is indeed an independent set. This is case (c) of the theorem.

The remaining case to consider is where every prime factor of $\exp(G)$ divides $k$. We cannot have $\exp(G)$ dividing $k$, for then $s_k(G)=|G|-1$. So there is a prime $p$ and a positive integer $\ell\geq 2$ such that $p^{\ell-1}$ divides $k$, $p^\ell$ does not divide $k$, and $p^\ell$ divides $\exp(G)$. This guarantees that $G$ contains an element $x$ of order $p^\ell$. Let $g=x^k$, an element of order $p$. Then $g\neq 1$ and $D_k(G)$ contains directed edges $(x,g)$ and $(x^{p+1}, g)$. Hence $X=G\setminus\{1,g\}$ is the only potential independent set. If $g^2\not\in\{1,g\}$, then $D_k(G)$ also contains directed edges $(x^2, g^2)$ and $(x^{p+2}, g^2)$. But $\{x^2,x^{p+2},g^2\}\subseteq X$, a contradiction. Therefore $g^2\in\{1,g\}$. Since g is non-trivial, $g^2=1$. So $g$ is an involution, $p=2$, and $k$ is even. We have shown that if $q$ is any odd prime divisor of $\exp(G)$ such that $q^{\ell'-1}$ divides $k$ and $q^{\ell'}$ divides $\exp(G)$ then $q^{\ell'}$ divides $k$. Hence the maximal odd factor of $\exp(G)$ divides $k$.

If $G$ contains an element $y$ of order $2^{\ell+1}$, then $D_k(G)$ has a directed path $y, y^k, 1$. Since $y$ has order $2^{\ell+1}$ and $y^k$ has order 4, neither of these elements are $1$ or $g$, hence both lie in $X$, contradicting the fact that $X$ is an independent set. Therefore $G$ does not contain an element of order $2^{\ell+1}$, meaning that $\exp(G)$ divides $2k$.

Any conjugate $g'$ of $g$ has the same in-degree as $g$, so there is an edge $(x', g')$, where $x'$ is the corresponding conjugate of $x$. If $g'\neq g$, then both $x'$ and $g'$ lie in $X$, a contradiction. Therefore $g$ is central in $G$, and so $\langle g\rangle$ is a normal subgroup. Finally, since $X$ is an independent set, we have $h^k\in\langle g\rangle$ for all $h\in G$. Hence $H:=G/\langle g\rangle$ has exponent dividing $k$. We have shown that in this case $k$ is even and $G$ is a central extension of $H$ (a group of exponent dividing $k$) by $\langle g\rangle\cong C_2$. The fact that $G$ does not have a $k$-power-avoiding set of size $|G|-1$ already implies that $\exp(G)\neq k$. This is case (d) of the theorem.
\end{proof}

\subsection{Groups $G$ with $s_k(G)=|G|-2$ when $k$ is prime}
\label{sec:k=2}

We can use Theorem~\ref{thm:c=2} to derive an even more explicit classification of finite groups $G$ with $s_k(G)=|G|-2$ if we restrict to the case when $k$ is prime. When $k$ is an odd prime, Theorem~\ref{thm:c=2} immediately gives the following.

\begin{corollary}
\label{cor:k_odd_prime}
Let $G$ be a finite group and $p$ be an odd prime. Then $s_p(G)=|G|-2$ if and only if either $G$ is isomorphic to the direct product of $C_2$ with a group of exponent $p$, or $p\equiv 2\mod{3}$ and $G\cong C_3$.
\end{corollary}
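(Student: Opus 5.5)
The plan is to specialise Theorem~\ref{thm:c=2} to $k=p$, an odd prime, and check which of its four cases survive.

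Since $p$ is odd, cases (c) and (d) cannot occur, because both require $k$ to be even. So only (a) and (b) remain.

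Case (b) translates directly: it says $p\equiv 2\pmod 3$ and $G\cong C_3$, which is exactly the second alternative in the corollary.

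Case (a) says $G\cong H\times C_2$ with $\exp(H)$ dividing $p$. Since $p$ is prime, $\exp(H)$ is $1$ or $p$. If $\exp(H)=1$, then $H$ is trivial and $G\cong C_2$. This group is the direct product of $C_2$ with the trivial group, and the trivial group has exponent $1$. So "a group of exponent $p$" in the statement should be read as "a group of exponent dividing $p$", with the trivial group allowed. This convention should be stated explicitly, because $s_p(C_2)=0=|C_2|-2$ genuinely holds: the element $g$ of order $2$ satisfies $g^p=g$, so it carries a loop and cannot lie in any independent set.

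For the converse direction, Theorem~\ref{thm:c=2} is an "if and only if". Any group of the form $H\times C_2$ with $\exp(H)\mid p$ therefore falls under (a) with $k=p$ odd. Likewise $C_3$ with $p\equiv 2\pmod 3$ falls under (b). In both cases $s_p(G)=|G|-2$ follows immediately from the theorem.

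There is no real obstacle here. The only points needing care are the bookkeeping on parity, which eliminates (c) and (d), and the degenerate case $H=1$, which must be covered by the reading of "exponent $p$" as "exponent dividing $p$".
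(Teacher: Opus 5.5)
Your proposal is correct and matches the paper, which obtains this corollary directly from Theorem~\ref{thm:c=2} by discarding the even-$k$ cases (c) and (d) and specialising (a) and (b) to $k=p$. Your remark about the degenerate case is also right: since $s_p(C_2)=0=|C_2|-2$, the phrase ``exponent $p$'' in the statement must be read as ``exponent dividing $p$'' so that $H=1$ is allowed, a point the paper passes over.
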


When $k=2$, more groups occur. 
Recall that a $p$-group $E$ is \emph{extra-special} if $Z(E)$ has order $p$, and $E/Z(E)$ is non-trivial and elementary abelian. There are exactly two isomorphism classes of extra-special groups of order $p^n$ when $n$ is odd and $n\geq 3$, and no extra-special groups for other values of $n$. (See, for example, Gorenstein~\cite[Section~5.5]{Gorenstein}.)
We have the following result.

\begin{theorem}
\label{thm:k=2_classification}
Let $G$ be a finite group. Then $s_2(G)=|G|-2$ if and only if one of the following holds.
\begin{enumerate}
\item[(a)] $G\cong C_3$;
\item[(b)] $G$ is dihedral of order $6$;
\item[(c)] $G\cong (C_2)^j\times C_4$ for some non-negative integer $j$;
\item[(d)] $G\cong (C_2)^j\times E$ for some non-negative integer $j$ and some extra-special $2$-group $E$;
\item[(e)] $G\cong (C_2)^j\times(C_4*E)$ for some non-negative integer $j$ and some extra-special $2$-group $E$, where $C_4*E$ is the central product of $E$ with $C_4$ (of order $|2|E|$).
\end{enumerate}
\end{theorem}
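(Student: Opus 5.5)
The plan is to specialise Theorem~\ref{thm:c=2} to $k=2$ and then classify the groups arising in its case (d) explicitly. With $k=2$, case (a) cannot occur because $k$ is even. Case (b) applies because $2\equiv 2\pmod 3$, and gives (a) of the present theorem. Case (c) applies because $2\equiv 2\pmod 6$, and gives (b). So everything reduces to describing the groups in case (d) of Theorem~\ref{thm:c=2} when $k=2$. These are the groups $G$ having a central subgroup $Z=\langle g\rangle$ of order $2$ such that $G/Z$ has exponent dividing $2$ and $\exp(G)\nmid 2$. Since $G/Z$ is elementary abelian, $G$ is a $2$-group with $G'\le Z$ and $G^2\le Z$. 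Because $G$ is not elementary abelian, $\Phi(G)=G'G^2=Z$ has order exactly $2$, and $\exp(G)=4$.

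Conversely, each group in (c)--(e) has exactly this shape. For each one I will check three things: $\Phi(G)$ is central of order $2$, the quotient by it is elementary abelian, and $G$ contains an element of order $4$. Then case (d) of Theorem~\ref{thm:c=2} gives $s_2(G)=|G|-2$. So the theorem is equivalent to the following claim: \emph{a finite $2$-group $G$ with $|\Phi(G)|=2$ is, up to isomorphism, one of the groups listed in (c), (d), (e).}

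For the claim, write $Z=\Phi(G)=\{1,z\}$ and $V=G/Z$, viewed as a vector space over $\mathbb{F}_2$. Squaring induces a map $q\colon V\to Z\cong\mathbb{F}_2$. From $(xy)^2=x^2y^2[y,x]$, valid because commutators are central, $q$ is a quadratic form whose polar bilinear form is the commutator pairing $b(xZ,yZ)=[x,y]$. The radical of $b$ is $Z(G)/Z$. Split $V=R\oplus W$ with $R=Z(G)/Z$ and $b|_W$ nondegenerate, and take $E$ to be the preimage of $W$ in $G$. If $W\neq 0$ then $E$ has centre $Z$ and $E/Z$ elementary abelian, so $E$ is extra-special. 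Also $G=Z(G)E$ with $Z(G)\cap E=Z$, so $G$ is the central product $Z(G)*E$ amalgamating $Z$.

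It remains to describe $Z(G)$. It is abelian with $Z(G)^2\le Z$, so $Z(G)\cong C_2^{m}$ or $Z(G)\cong C_2^{m}\times C_4$, where in the second case the $C_4$ has $z$ as its square. This leads to three cases.
\begin{enumerate}
\item[(i)] $Z(G)$ is elementary abelian. Then $Z$ is a direct factor of $Z(G)$, so $G\cong C_2^{j}\times E$. Here $E$ must be nontrivial, since otherwise $G$ would be elementary abelian. This gives (d).
\item[(ii)] $Z(G)$ contains a $C_4$ and $W\ne 0$. Then $G\cong C_2^j\times(C_4*E)$, which is (e).
\item[(iii)] $Z(G)$ contains a $C_4$ and $W=0$. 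Then $G=Z(G)\cong C_2^j\times C_4$, which is (c).
\end{enumerate}

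The main obstacle is this decomposition step. There are two points to get right. First, the splitting of $V$ must lift to the group-level central product: the complement $C_2^j$ has to be chosen inside $Z(G)$ so that it meets $E$ (or $C_4*E$) trivially. Choosing a complement to $Z$, or to the $C_4$ containing $Z$, within the abelian group $Z(G)$ handles this. Second, the list must not depend on the choice of $E$ when $C_4$ is present. The identity $C_4*D_8\cong C_4*Q_8$ shows the two extra-special types collapse there, so (e) is well defined as stated. I will cite Gorenstein for the central-product facts rather than redo the form-theoretic argument in full.
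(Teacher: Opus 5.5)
Your proposal is correct, and after the common first step it reaches the classification by a genuinely different route from the paper's.

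Both arguments begin by specialising Theorem~\ref{thm:c=2} to $k=2$. This reduces to a $2$-group $G$ with a central $Z=\{1,z\}$, $G/Z$ elementary abelian and $\exp(G)=4$.

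The paper then handles the abelian case on its own. In the non-abelian case it writes $G=(C_2)^j\times M$ with $M$ having no $C_2$ direct factor. It then argues by contradiction that an involution in $Z(M)\setminus Z$ would split off a $C_2$. This forces either $Z(M)=Z$, so $M$ is extra-special, or $Z(M)\cong C_4$, so $M=C_4*E$.

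You instead use the commutator pairing on $V=G/Z$. Taking $E$ to be the preimage of any complement $W$ to the radical $Z(G)/Z$ gives $G=Z(G)*E$ in one step, with $E$ extra-special exactly when $W\neq 0$. You then read off the $(C_2)^j$ factor from the abelian group $Z(G)$, which has exponent dividing $4$ and at most one nontrivial square. The complement is chosen inside $Z(G)$ so that it meets $E$ trivially.

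Your version treats the abelian and non-abelian cases uniformly and makes the direct-product decomposition explicit. The paper's version is more elementary and avoids bilinear-form language. Your check of the converse, by verifying the hypotheses of case~(d) of Theorem~\ref{thm:c=2}, is also more explicit than the paper's ``not hard to check''.

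Two of your concerns are unnecessary.
\begin{itemize}
\item The quadratic form $q$ plays no role; only its polar form $b$ is used. $q$ would matter only if you wanted to distinguish the two isomorphism types of $E$.
\item Statement~(e) is existential in $E$. For each $E$ the group $C_4*E$ is well defined, because there is only one isomorphism between groups of order $2$. So the identity $C_4*D_8\cong C_4*Q_8$ is not needed for the statement.
\end{itemize}
Nor do you need to cite Gorenstein, since the decomposition you sketch is already complete.
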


\begin{proof}
It is not hard to check that all the groups $G$ listed in Theorem~\ref{thm:k=2_classification} do indeed have the property that $s_2(G)=|G|-2$. It suffices to show that there are no more groups with this property.

Let $G$ be a group with $s_2(G)=|G|-2$. By Theorem~\ref{thm:c=2} in the case $k=2$, either $G\cong C_3$, $G$ is dihedral of order $6$, or $G$ satisfies the conditions in case~(d) of Theorem~\ref{thm:c=2}. So it suffices to show that when $G$ satisfies the conditions in case~(d) of Theorem~\ref{thm:c=2} for $k=2$, we have that $G\cong (C_2)^j\times C_4$, $G\cong (C_2)^j\times E$ or $G\cong (C_2)^j\times(C_4*E)$, where $j$ is a non-negative integer and where $E$ is an extra-special $2$-group.

Let $G$ be a finite group satisfying the conditions of case~(d) of Theorem~\ref{thm:c=2} for $k=2$. So $G$ is not of exponent $2$, but $G$ contains a central subgroup $Z=\{1,g\}$ of order $2$ such that $H:=G/Z$ has exponent $2$. Since $H$ has exponent $2$, we see that $H$ is elementary abelian. The exponent of $G$ divides $4$ and is not $2$, so $G$ has exponent $4$. It suffices to show that $G$ is one of the groups described in parts~(c), (d) and~(e) of Theorem~\ref{thm:k=2_classification}.

Since $G/Z$ is elementary abelian, $x^2\in Z$ for all $x\in G$ and so $G^2\subseteq Z$, where $G^2$ is the subgroup generated by all squares in $G$. Since $G$ is not of exponent $2$, the subgroup $G^2$ is non-trivial. Hence $G^2=Z$.   

Suppose that $G$ is abelian. Since $|G^2|=|Z|=2$ and since $G$ has exponent~$4$, we must have $G\cong (C_2)^j\times C_4$ for some non-negative integer $j$. This is case~(c) of the theorem, as required.

Now suppose that $G$ is non-abelian, so $G'$ is nontrivial. Since $G/Z$ is abelian, $G'\subseteq Z$. As $G'$ is non-trivial, we find that $G'=Z$.

We may write $G=(C_2)^j\times M$ for some non-negative integer $j$, where $M$ is a subgroup which does not have $C_2$ as a direct factor. Since $G$ is non-abelian, so is $M$. So $M'$ is a non-trivial subgroup of $G'$, and hence $M'=Z$. In particular, $Z\subseteq M$. The quotient group $M/Z$ is elementary abelian, as it is a subgroup of $G/Z$.

Since $Z$ is central in $G$, we see that $Z\subseteq Z(M)$. If $Z(M)=Z$, then $M$ is extra-special and so $G$ is described by case~(d) of the theorem, as required. So we may assume that $Z$ is a proper subgroup of $Z(M)$.

Suppose $y\in Z(M)\setminus Z$, and let $Y=\langle y\rangle$. Let $E$ be the subgroup of $M$ containing $Z$ such that $E/Z$ is a complement to $YZ/Z$ in $M/Z$. Since $E/Z$ is a complement to $YZ/Z$, we see that $Y\cap E\subseteq Z$. Since $y$ is central, $\langle y\rangle$ is normal in $M$. Moreover, $E$ is normal in $M$ as $E/Z$ is normal in $M/Z$. If $y$ has order $2$, then $Y\cap Z$ is trivial and so $M=Y\times E$. But then we have a contradiction, since $Y\cong C_2$ and $M$ has no direct factors isomorphic to $C_2$. We may deduce that $Y\cong C_4$.

The argument in the previous paragraph shows that all elements in $Z(M)\setminus Z$ have order $4$. In particular, since $Z$ is a proper subgroup of $Z(M)$, we see that $Z(M)\cong C_4$. So $Z(M)=Y$.

Since $YZ/Z$ has exponent $2$, we see that $Z\cap Y$ is non-trivial and so $Z\subseteq Y$. Since $E/Z$ is a complement to $Y/Z$ in $M/Z$, we deduce that $Y\cap E=Z$. So $M=Y*E$, the central product of $Y$ and $E$ at $Z$.

Since $Y$ is central and $M$ is generated by $Y$ and $E$, we see that $Z(E)\subseteq Z(M)$. But $Z(M)= Y$, so $Z(E)\subseteq Y\cap E=Z$. Since $Z(E)$ is non-trivial, $Z(E)=Z$. In particular, $|Z(E)|=2$. But $E/Z$ is elementary abelian, as $E/Z$ is a subgroup of the elementary abelian group $M/Z$. So $E$ is extra-special. This means that $G\cong (C_2)^j\times (C_4*E)$ where $E$ is extra-special, and we are under case~(e) of the theorem as required.
\end{proof}

Writing $D_8$ and $Q_8$ for the dihedral and quaternion groups of order $8$ respectively, the two extra-special groups $E_1$ and $E_2$ of order $2^{2r+1}$ can be written as iterated central products: $E_1\cong D_8*D_8*\cdots *D_8$ and $E_2\cong Q_8*D_8*\cdots *D_8$ respectively, where there are $r$ groups in each of the products. These groups can be distinguished by counting the number of involutions, so there are exactly two isomorphism classes of groups in part~(d) of Theorem~\ref{thm:k=2_classification} once $j$ and $|E|$ are fixed. It might be slightly surprising to realise that there is only one group in part~(e) of the theorem (up to isomorphism) when $j$ and $|E|$ are fixed, because (we claim) $C_4*E_1\cong C_4*E_2$. To establish the claim, we first note that $C_4*E_1\cong (C_4*D_8)*(D_8*\cdots *D_8)$ and $C_4*E_2\cong (C_4*Q_8)*(D_8*\cdots *D_8)$, so it suffices to show that $C_4*D_8\cong C_4*Q_8$. Choose generators $a, b$ for $D_8$ with $a^4 = 1 = b^2$ and $ba = a^{-1}b$, and a generator $x$ for $C_4$ with $x^2 = a^2$, then $D_8*C_4 = \langle a,b,x\rangle = \langle a, bx, x\rangle$ and $a^4 = 1$, $(bx)^2 = a^2$, $(bx)a = a^{-1}(bx)$ so $\langle a, bx\rangle \cong Q_8$. Thus $D_8 * C_4 \cong Q_8 * C_4$ and our claim is established.

\section{More on the directed $k$th-power graph}
\label{sec:structure}

The directed $k$th power graph $D_k(G)$ of a group $G$ has lots of symmetry. In particular, since any automorphism of $G$ commutes with taking $k$th powers, we have the following theorem.

\begin{theorem}
    \label{thm:automorphisms}
    Let $G$ be a group, let $k$ be a positive integer with $k\geq 2$, and let $D_k(G)$ be the directed $k$th power graph of $G$. Then $\Aut(G)$ acts faithfully as a group of (directed graph) automorphisms of $D_k(G)$.
\end{theorem}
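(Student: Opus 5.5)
The plan is to verify three things directly from the definitions: each automorphism preserves edges, the action is a homomorphism, and it is faithful.

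First, edge preservation. The vertex set of $D_k(G)$ is $G$, and the edge set is $\{(g,g^k): g\in G\}$. Take $\alpha\in\Aut(G)$ and view it as a permutation of the vertex set. Since $\alpha$ is a homomorphism, $\alpha(g^k)=\alpha(g)^k$ for every $g\in G$. Hence $\alpha$ maps the edge $(g,g^k)$ to $(\alpha(g),\alpha(g)^k)$, which is again an edge of $D_k(G)$. So $\alpha$ maps the edge set into itself. Because $\alpha$ is a bijection of vertices, the induced map on ordered pairs is injective, and the edge set is finite with $|E|=|G|$. It follows that $\alpha$ permutes the edge set. Alternatively, apply the same argument to $\alpha^{-1}$. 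Either way, $\alpha$ is a directed graph automorphism, and in particular loops are sent to loops.

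Second, the homomorphism property. The map sending $\alpha\in\Aut(G)$ to the vertex permutation $\alpha$ is just the restriction of the natural action of $\Aut(G)$ on the set $G$. Composition is therefore respected automatically: the graph automorphism induced by $\alpha\beta$ is the composite of those induced by $\alpha$ and $\beta$. This gives a homomorphism from $\Aut(G)$ into the automorphism group of $D_k(G)$.

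Third, faithfulness. If $\alpha$ acts trivially on the vertices, then $\alpha(g)=g$ for all $g\in G$, so $\alpha$ is the identity automorphism. The kernel is therefore trivial.

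There is no real obstacle here. The only point needing care is that a graph automorphism must be a bijection on edges and not merely send edges to edges; finiteness (or applying the argument to $\alpha^{-1}$) settles this. The hypothesis $k\geq 2$ plays no role beyond fixing the graph under consideration.
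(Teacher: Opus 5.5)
Your proof is correct and follows the paper's own reasoning. The paper justifies the theorem only by the remark that every automorphism commutes with the $k$th power map, $\alpha(g^k)=\alpha(g)^k$; you have written out the edge-bijection, homomorphism and faithfulness checks that the paper leaves implicit, and your $\alpha^{-1}$ argument also works if $G$ is infinite.
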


So $\Aut(D_k(G))$ (the group of directed graph automorphisms of $D_k(G)$) has a subgroup isomorphic to $\Aut(G)$. In general, there will be other symmetries. For example, taking any integer $m$ coprime to the exponent $e$ of $G$, the map $x\mapsto x^m$ is not in general a group automorphism of $G$, but is a directed graph automorphism of $D_k(G)$. To see this, note that the map $x\mapsto x^m$ is bijective, as its inverse is the map $x\mapsto x^{m'}$ where $mm'\equiv 1 \bmod e$. (A suitable integer $m'$ exists because $(m,e)=1$.) The map $x\mapsto x^m$ is therefore an isomorphism of directed graphs, since $h=g^k$ implies that $h^m=(g^k)^m=(g^m)^k$ for all elements $g,h\in G$.

We will prove two theorems which set out some of the structure of the directed $k$th power graph, allowing us to provide more automorphisms of $D_k(G)$. We begin with some straightforward observations.

\begin{theorem}
    \label{thm:basic_structure}
    Let $G$ be a finite group, and let $k$ be an integer with $k\geq 2$. Let $D_k(G)$ be the directed $k$th power graph of $G$.
\begin{enumerate}
    \item[(a)] Every connected component of $D_k(G)$ contains exactly one (directed) cycle. If the edges of this cycle are removed, the component becomes a root-directed forest whose roots are the vertices of the cycle.
    \item[(b)] A vertex $b\in D_k(G)$ lies in a directed cycle if and only if the order $s$ of $b$ is coprime to $k$.
    \item[(c)] Let $(b_0,b_1,\ldots,b_{r-1})$ be a directed cycle in $D_k(G)$. The elements $b_i$ in the cycle have the same order $s$ and equal centralisers in $G$.
\end{enumerate}
\end{theorem}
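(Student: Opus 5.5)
I will prove the three parts in order, using only that $D_k(G)$ is the functional digraph of the map $f\colon G\to G$, $f(g)=g^k$, so every vertex has out-degree~$1$.

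\emph{Part (a).} This is the standard structure of a functional graph; the argument is essentially the one in the proof of Lemma~\ref{lem:nonleaf}. From any vertex $v$, the walk $[v]$ eventually repeats a vertex, so it enters a directed cycle. Hence every connected component contains at least one cycle. Now suppose a component contained two distinct cycles $C_1$ and $C_2$. These are vertex-disjoint, since out-degree $1$ forces a cycle to be determined by any one of its vertices. Take an undirected path joining them in the component and a vertex on it with two outgoing path edges; this contradicts out-degree $1$. More simply, I would count: a connected graph with $n$ vertices and $n$ edges (one per vertex) has exactly one cycle in the undirected sense, and every undirected cycle in a functional graph is directed. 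Removing the $r$ edges of the cycle leaves each cycle vertex with out-degree $0$ and every other vertex with out-degree $1$. Following $[v]$ until it first hits the cycle gives a unique directed path to a cycle vertex, so we obtain a root-directed forest whose roots are exactly the cycle vertices, as in Lemma~\ref{lem:nonleaf}.

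\emph{Part (b).} Suppose $b$ has order $s$ with $\gcd(s,k)=1$. Then $k$ is a unit modulo $s$, so $k^r\equiv 1 \pmod s$ for some $r\geq1$, and $b^{k^r}=b$; thus $b$ lies on a cycle. Conversely, if $b^{k^r}=b$ with $r\geq 1$, then $s\mid k^r-1$, so $\gcd(s,k)=1$.

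\emph{Part (c).} Each $b_{i+1}=b_i^k$ is a power of $b_i$. Going around the cycle, $b_i$ is also a power of $b_{i+1}$, namely $b_i=b_{i+1}^{k^{r-1}}$. Hence $\langle b_i\rangle=\langle b_{i+1}\rangle$ for all $i$, and all the $b_i$ generate the same cyclic subgroup. It follows that they have the same order $s$, and that their centralisers coincide, since an element commutes with $b_i$ exactly when it commutes with $\langle b_i\rangle$.

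The only step needing care is the uniqueness of the cycle in (a). I would argue it directly: if two disjoint directed cycles lay in one component, take a shortest undirected path between them. Its first edge leaves a cycle vertex, so it must be an incoming edge, since the unique out-edge of that vertex stays on the cycle. Orienting along the path, some internal vertex would then need two out-edges, a contradiction.
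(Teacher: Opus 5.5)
Your proof is correct. Parts (b) and (c) match the paper's argument, and so does the forest part of (a). Your observation that $\langle b_i\rangle=\langle b_{i+1}\rangle$ is a tidier packaging of the paper's facts $C_G(x)\le C_G(x^j)$ and ``$|x^j|$ divides $|x|$'', applied in both directions around the cycle.

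The one genuine divergence is the uniqueness of the cycle in (a). The paper argues with forward walks. An edge $u\to v$ gives $[v]\subseteq[u]$, so adjacent vertices eventually enter the same cycle. Propagating this along an undirected path shows that every vertex of a component drains into a single cycle. This needs no conventions about the underlying undirected graph, and it shows directly that each walk $[v]$ reaches the component's cycle.

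You instead use either a count (a connected component with $n$ vertices and $n$ edges is unicyclic) or an orientation argument on a shortest path joining two cycles. Both are valid, but the count only works if the underlying graph is taken as a multigraph. The loops (at $1$, and at every $g$ with $g^{k-1}=1$) and the doubled edge of each $2$-cycle $g\leftrightarrow g^k$ must be kept. Otherwise a component such as $\{1\}$, or $\{g,g^2\}$ in $C_3$ with $k=2$, has fewer than $n$ edges. Your phrase ``one per vertex'' shows you are counting directed edges, so this is implicit, but it is worth stating.

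The shortest-path argument avoids this issue entirely. Its only edge case is a path of length $1$: there is then no internal vertex, but the single edge would have to be the out-edge of neither endpoint, which is impossible since every edge is the out-edge of its tail.
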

\begin{proof}
The first part of the theorem follows since $D_k(G)$ is constructed by iterating a function (namely the function $x\mapsto x^k$) on a finite set. In more detail, we argue as follows.

Let $v$ be a vertex of $D_k(G)$ and let $[v]=v_0,v_1,v_2,\ldots$ be the unique infinite directed walk starting at $v$. (Note that the walk $[v]$ is indeed unique, since $D_k(G)$ has out-degree $1$.) Since the vertex set $G$ of $D_k(G)$ is finite, this path must eventually enter a cycle. Hence every connected component of $D_k(G)$ contains a directed cycle. Suppose that $u$ and $v$ lie in the same connected component of the undirected graph underlying $D_k(G)$. If $u$ and $v$ are joined by a directed edge in  $D_k(G)$, then there is an edge $u\rightarrow v$ or an edge $v\rightarrow u$ in $D_k(G)$ and so (as sets of vertices) we have $[v]\subseteq [u]$ or $[u]\subseteq [v]$ respectively. Thus $[u]$ and $[v]$ eventually enter the same cycle. Applying this argument repeatedly along a path between any two vertices in the connected component shows that the component contains exactly one cycle.

If there is a cycle in the undirected graph underlying $D_k(G)$ then it must correspond to a directed cycle in $D_k(G)$, because $D_k(G)$ has out-degree $1$. So when we remove the edges in the unique directed cycle in a directed component we obtain a cycle-free graph: a directed forest. The vertices of this directed forest have out-degree $1$ (for vertices outside the cycle) or out-degree $0$ (for vertices in the cycle). The result is a root-directed forest, whose roots are the vertices of the original cycle. Part (a) of the theorem follows.

We now turn to second part of the theorem. Suppose $b\in D_k(G)$ is a vertex of order $s$. If $b$ lies in a directed cycle of length $r$, then $b^{k^r}=b$ and so $b^{k^r-1}=1$. Hence $s$ divides $k^r-1$ and we see that $s$ is coprime to $k$. Conversely, suppose that $s$ is coprime to $k$. So $k$ is invertible modulo $s$, of multiplicative order $r$, say. Since $k^r\equiv 1\bmod s$, we find that $v^{k^r}=v$ and so $v$ lies in a cycle, as required. (Indeed, this cycle has length $r$.) So part (b) of the theorem follows.

We now prove the final part of the theorem. We first observe that for an element $x\in G$ and an integer $j$, we have that $C_G(x)\leq C_G(x^j)$, and the order $|x^j|$ of $x^j$ divides the order $|x|$ of $x$.

Let $(b_0,\ldots,b_{r-1})$ be a cycle in $D_k(G)$. Let $i\in\{1,2,\ldots ,r-1\}$. Since $b_i=b_0^{k^i}$, we see that 
$C_G(b_0)\leq C_G(b_i)$, and
$|b_i|$ divides $ |b_0|$. Since $b_0=b_i^{k^{r-i}}$, we see that
$C_G(b_i)\leq C_G(b_0)$ and 
$|b_0|$ divides $ |b_i|$. Hence
$C_G(b_0)= C_G(b_i)$ and 
$|b_i|=|b_0|$, and the theorem follows.
\end{proof}

\begin{definition}
    Let $D_k(G)$ be the directed $k$th power graph of a group $G$, and suppose $b\in D_k(G)$ is a vertex contained in a cycle $C$ in $D_k(G)$. We define $T_b$ to be the set of vertices $v\in D_k(G)$ such that there exists a directed path from $v$ to $b$ that avoids $C\setminus \{b\}$. We give $T_b$ the structure of a root-directed tree with root $b$ by adding a directed edge from $g$ to $g^k$ whenever $g\in T_b\setminus\{b\}$. (In other words, we take the directed graph on $T_b$ induced by $D_k(G)$, removing the loop at $b$ if necessary.)
\end{definition}
\begin{definition}
    The \emph{primary component} of $D_k(G)$ is the set $T_1$ of vertices $v$ such that there is a directed path from $v$ to the identity $1$ of $G$.
\end{definition}

For example, in Figure~\ref{fig:graph_independent_set} the vertex set of the primary component $T_1$ is $\{1,g^5,g^{10},g^{15}\}$, and $T_{g^4}$ has vertex set $\{g,g^2,g^4,g^{11}\}$. The subgraph $T_{g^7}$ is not defined, since $g^7$ is not in a cycle.

We note that a vertex $v$ lies in the primary component $T_1$ of $D_k(G)$ if and only if the order of $v$ divides some power of $k$. If we regard $D_k(G)$ as an undirected graph by ignoring edge orientations, we see that $T_1$ is a connected component of $D_k(G)$, which explains our use of the word `component' here.

We remark that if we remove the edges in a cycle $C=(b_0,b_1,\ldots ,b_{r-1})$ from the connected component containing $C$, we obtain the root-directed forest which is the disjoint union of the trees $T_{b_i}$ for $0\leq i\leq r-1$.

For any subgroup $H$ of $G$, we see that $H\cap T_1$ is a root-directed subtree of $T_1$ with the same root $1$. This follows since $1\in H$ and since the map $x\mapsto x^k$ maps any element $x\in H$ into $H$.

\begin{theorem}
    \label{thm:primary_tree}
    Let $G$ be a group, and let $k$ be an integer with $k\geq 2$. Let $(b_0,b_1,\ldots ,b_{r-1})$ be a directed cycle in $D_k(G)$. Let $b$ be an element of the cycle; without loss of generality we take $b=b_0$. Then $T_1\cap C_G(b)$ and $T_{b}$ are isomorphic as root-directed trees.
\end{theorem}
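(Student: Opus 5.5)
The natural map to use is $\phi\colon T_1\cap C_G(b)\to G$ given by $\phi(x)=xb$. I will show that it is a bijection onto $T_b$ and that it respects the edges, with $1$ mapped to $b$.

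\textbf{Step 1: edges.} Let $x\in T_1\cap C_G(b)$. Since $x$ commutes with $b$, we have $(xb)^k=x^kb^k=x^kb_1$. So an edge $x\to x^k$ corresponds to an edge $xb\to x^kb_1$, and $x^kb_1$ is not $x^kb$ unless $b_1=b$. A plain translation by $b$ therefore does not preserve edges, and I will twist the map along the cycle. Let $s$ be the order of $b$. By Theorem~\ref{thm:basic_structure}(b), $s$ is coprime to $k$. Every element of $T_1$ has order dividing a power of $k$, hence coprime to $s$. For $x\in T_1\cap C_G(b)$ I will write $x$ through its depth $d(x)$, the length of the path from $x$ to $1$. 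I then define $\psi(x)=x\,b_{-d(x)}$, with indices taken modulo $r$. For a non-root $x$ we have $d(x^k)=d(x)-1$, so
\[
\psi(x)^k=x^k b_{-d(x)}^k=x^k b_{-d(x)+1}=\psi(x^k).
\]
Here I use that each $b_i$ generates the same cyclic group as $b$, so every $b_i$ commutes with $x$, by Theorem~\ref{thm:basic_structure}(c). Thus $\psi$ maps edges to edges and sends the root $1$ to $b_0=b$.

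\textbf{Step 2: image and bijectivity.} Since $xb_j$ is a commuting product of elements of coprime orders, the pair $(x,b_j)$ is recovered as its $k$-part and $k'$-part, both powers of $xb_j$. This makes $\psi$ injective. For $x\neq 1$, the element $\psi(x)$ has order not coprime to $k$, so it is not on a cycle by Theorem~\ref{thm:basic_structure}(b). Its directed path reaches $\psi(1)=b$ after $d(x)$ steps, and before that it passes only through non-cycle vertices, so it avoids $C\setminus\{b\}$. Hence $\psi(x)\in T_b$.

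Conversely, take $v\in T_b$ and write $v=uw$ for its commuting $k$-part $u$ and $k'$-part $w$; both are powers of $v$. Suppose the path from $v$ to $b$ has length $d$. Then $u^{k^d}w^{k^d}=b$, so comparing parts gives $u^{k^d}=1$ and $w^{k^d}=b$. Hence $u\in T_1$, and $w$ lies on the cycle and satisfies $w=b_{-d}$. Since $u$ is a power of $v$, it commutes with $w$. It also commutes with $b$, because $b$ is a power of $w$, so $u\in C_G(b)$. It remains to show that $d(u)=d$. If $d(u)<d$, the walk from $v$ would already hit $b_{-d+d(u)}\in C$ after $d(u)$ steps; since $d\le r$ can fail, this point needs checking. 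The key fact is that $v^{k^j}$ is on the cycle exactly when its $k$-part is trivial, i.e.\ when $j\ge d(u)$. The path to $b$ avoids the other cycle vertices, so the first cycle vertex it meets is $b$, giving $d=d(u)$. Therefore $v=\psi(u)$ and $\psi$ is surjective. A bijection preserving edges between trees with the same number of edges is an isomorphism of root-directed trees.

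I expect the main obstacle to be the bookkeeping in Step 1: recognising that the naive map $x\mapsto xb$ fails and must be corrected by a depth-dependent cycle element. A second point to handle carefully is the exact meaning of "avoids $C\setminus\{b\}$" when $v$ itself is $b$ and $r=1$, which I will treat separately in that degenerate case.
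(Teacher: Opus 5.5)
Your proof is correct and follows essentially the same route as the paper: the paper uses the same twisted map $x\mapsto xb_{-h}$, with $h$ the height (your depth), checks edges via commutation with the $b_i$, and proves surjectivity by writing $y\in T_b$ as $(yb_{-h}^{-1})\cdot b_{-h}$, then comparing where the walk first meets the cycle. The only difference is cosmetic: you phrase injectivity and the splitting through the unique commuting decomposition into a part with order divisible only by primes dividing $k$ and a part with order coprime to $k$, whereas the paper compares element orders and observes directly that $b_{-h}$ is a power of $y$.
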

Before we prove Theorem~\ref{thm:primary_tree}, we note that, by Theorem~\ref{thm:basic_structure}(c), the centralisers $C_G(b_i)$ of the elements in a directed cycle $(b_0,b_1,\ldots,b_{r-1})$ are equal. So Theorem~\ref{thm:primary_tree} implies that all the trees $T_{b_i}$ which are attached to our directed cycle are isomorphic. Moreover, $T_1$ is a `universal object' for the trees attached to cycles in $D_k(G)$. For the example provided in Figure~\ref{fig:graph_independent_set}, we see that the trees attached to each element of the cycle $(g^4,g^8,g^{16},g^{12})$ are all isomorphic to $T_1$. The fact that all of $T_1$ is involved comes from the fact that the group $G$ is abelian in this example.

Theorems~\ref{thm:basic_structure} and~\ref{thm:primary_tree} show that $D_k(G)$ will generally have many automorphisms: there are automorphisms that rotate each cycle, whilst leaving other cycles fixed, and any automorphism of a directed tree $T_b$ can be extended to an automorphism of $D_k(G)$ (acting as the identity outside $T_b$).

\begin{proof}[Proof of Theorem~\ref{thm:primary_tree}]
We regard the subscripts of the elements $b_i$ in our cycle as being taken modulo $r$. So we may write $b_{i+1}=b_i^k$ for all integers $i$. In particular, $b_{-j}$ is the unique element of the cycle such that $b_{-j}^{k^j}=b_0=b$. 

Let $x\in T_1\cap C_G(b)$. We define the \emph{height} $h$ of $x$ to be the smallest positive integer such that $x^{k^h}=1$. So $h$ is the smallest power of $k$ divisible by the order of $x$. In particular, the height of an element is determined by its order, and elements of different heights cannot have the same order. Note that when $x$ has positive height $h$, the element $x^k$ has height $h-1$. So the order of $x^k$ is strictly less than the order of $x$ when $x\not=1$.

We provide a map $\theta:T_1\cap C_G(b)\rightarrow T_b$, which we will prove is an isomorphism, as follows. We define, for every $x\in T_1\cap C_G(b)$ of height $h$,
\[
\theta(x)=xb_{-h}.
\]
We need to show that $\theta$ is well defined, by showing that $\theta(x)\in T_b$.

For any $g\in C_G(b)$ and any non-negative integer $i$, we see that $g\in C_G(b)=C_G(b_i)$, by Theorem~\ref{thm:basic_structure}(c). Hence for all $x\in C_G(b)$ and all non-negative integers $j$, 
\begin{equation}
\label{eqn:homomorphism}
\left(\theta(x^{k^j})\right)^k=\left(x^{k^j}b_{j-h}\right)^k =x^{k^{j+1}}b_{j-h}^k=x^{k^{j+1}}b_{(j+1)-h}=\theta(x^{k^{j+1}}).
\end{equation}
So for all $x\in T_1\cap C_G(b)$, we have a directed walk
\[
\theta(x)\rightarrow \theta(x^k)\rightarrow\cdots \rightarrow \theta(x^{k^h})=x^{k^{h}}b_0=b
\]
from $\theta(x)$ to $b$ in $D_k(G)$. The sequence of elements $x,x^k,x^{k^2},\ldots,x^{k^h}$ have strictly decreasing height and so strictly decreasing order; all orders divide $k^h$. By Theorem~\ref{thm:basic_structure}, the elements $b_{-j}$ have equal order $s$, where $s$ is coprime to~$k$. Since $x^{k^j}$ and $b_{-j}$ have coprime orders and commute, 
\[
|\theta(x^{k^j})|=|x^{k^j}b_{j-h}|=|x^{k^j}||b_{j-h}|=|x^{k^j}|s
\]
and so the orders of the elements in our directed walk have strictly decreasing order. In particular, the elements in our walk are distinct: we have a directed path. Moreover, since $x$ has height $h$, the orders of the elements $x^{k^j}b_{j-h}$ are not coprime to $k$ when $j< h$, so the only member of our cycle involved in our path is $b$. Hence $\theta(x)\in T_b$, and $\theta$ is well-defined.

For $x\in C_G(b)\cap T_1$, we have $\theta(x^k)=\theta(x)^k$, by~\eqref{eqn:homomorphism} in the special case when $j=1$. So $\theta$ is a directed graph homomorphism. 

Suppose that $x,x'\in T_1\cap C_G(b)$ are such that $\theta(x)=\theta(x')$. We have $|\theta(x)|=|x|s$ and $|\theta(x')|=|x'|s$, so $|x|=|x'|$. In particular, $x$ and $x'$ have the same height $h$ and hence $xb_{-h}=\theta(x)=\theta(x')=x'b_{-h}$. So $x=x'$. We have shown that $\theta$ is injective.

It remains to show that $\theta$ is surjective. Let $y\in T_b$. Then there is a directed path
\[
y=y_0\rightarrow y_1\rightarrow\cdots \rightarrow y_h=b
\]
in $D_k(G)$, such that the elements $y_i$ for $i<h$ do not lie in our cycle, so have order greater than $s$. Define $x=yb_{-h}^{-1}$. Now $b_{-h}$ is a power of $y$, since $b_{-h}$ is a power of $b$ and since $b=y_h=y^{k^h}$. Since $x=yb_{-h}^{-1}$, we deduce that $x$ is a power of $y$ and, in particular, $x\in C_G(b)$. Because all powers of $y$ commute,
\[
y_i=y^{k^i}=(xb_{-h})^{k^i}=x^{k^i}b_{-h}^{k^i}=x^{k^i}b_{i-h}.
\]
Now $y_h=b$, so the formula above in the case $i=h$ implies that $b=y_h=x^{k^h}b_{0}=x^{k^h}b$, and therefore $x^{k^h}=1$. Hence $x\in T_1$ and has height $h'$ for some $h'\leq h$. But
$|y_i|=|x^{k^i}||b_{i-h}|=|x^{k^i}|s$, the first equality following by virtue of the fact that $x^{k^i}$ and $b_{i-h}$ are commuting elements of coprime order. Hence $y_{h'}$ has order $s$, and therefore lies on a cycle. Since the elements $y_i$ for $i<h$ do not lie on a cycle, we see that $h'\geq h$. Hence $h'=h$ and $x$ has height $h$. But then $\theta(x)=xb_{-h}=y$. So $\theta$ is surjective and the theorem is proved.
\end{proof}

\end{document}